\newcommand{\labitem}[2]{%
\def\@itemlabel{\textbf{#1}}
\item
\def\@currentlabel{#1}\label{#2}}
\newcommand{\R}{{\mathbb R}}
\newcommand{\fr} {\frac}
\newcommand{\e}{\epsilon}
\newcommand{\ds}{\displaystyle}
\newtheorem{thm}{Theorem}[section]
\newtheorem{theorem}{Theorem}[section]
\newtheorem{lemma}[thm]{Lemma}
\newtheorem{cor}[thm]{Corollary}
\newtheorem{remark}[thm]{Remark}
\newtheorem{prop}[thm]{Proposition}
\newtheorem{definition}[thm]{Definition}
\newtheorem*{notation}{Notation}
\newtheorem{claim}[thm]{Claim}
\theoremstyle{definition}
\begin{document}
\title{Yamabe Flow: steady solitons and type II singularities}

\author[Beomjun Choi]{Beomjun Choi}
\address{Beomjun Choi :
Columbia University, Department of Mathematics, 2990 Broadway, New York, NY 10027, USA}
\email{bc2491@columbia.edu }

\author[Panagiota Daskalopoulos]{Panagiota Daskalopoulos}
\address{Panagiota Daskalopoulos: Columbia University, Department of Mathematics, 2990 Broadway, New York, NY 10027, USA}
\email{pdaskalo@math.columbia.edu}


\begin{abstract}
We study the convergence of  complete non-compact conformally flat  solutions to the Yamabe flow to Yamabe steady solitons. 
We also prove the existence of Type II singularities which develop  at either a finite time $T$ or as $t \to +\infty$. 

 \end{abstract}
 \maketitle


\section{Introduction}\label{sec-introduction}

Let $(M,g_0)$ be a Riemannian  manifold without boundary  of dimension $n \geq 3$. If $g = u^{\frac 4{n+2}} \, g_0$
is a metric conformal to $g_0$, the scalar curvature $R$  of $g$ is given in terms of the
scalar curvature $R_0$ of $g_0$ by
$$R= u^{-1} \, \big ( - \bar c_n \Delta_{g_0} u^{\frac{n-2}{n+2}} + R_0 \, u^{\frac {n-2}{n+2}} \big )$$
where $\Delta_{g_0}$ denotes the Laplace Beltrami operator with respect to $g_0$ and $\bar c_n =  4 (n-1)/(n-2)$.

In 1989 R. Hamilton introduced the  {\em Yamabe flow}
\begin{equation}
\label{eq-yamabe}
\frac{\partial g}{\partial t} = -R\, g
\end{equation}
as an approach to solve the {\em Yamabe problem}  on manifolds of positive conformal Yamabe invariant.

In the case where $M$ is compact, the long time existence and convergence of Yamabe flow is well understood. 
Hamilton \cite{H} himself  showed the existence of the  normalized Yamabe flow (which is the re-parametrization of (\ref{eq-yamabe}) to keep the volume fixed)  
 for all time; moreover, in the case when the scalar curvature of the initial metric is negative,  he showed   the exponential convergence  of the  flow  to a  metric of constant scalar curvature. 
Chow \cite{Ch} showed the  convergence of the flow,  under the conditions that the initial metric is locally conformally flat and of positive Ricci curvature.
The convergence of the flow for any locally conformally flat initially metric was shown by Ye  \cite{Y}.

Schwetlick and Struwe  \cite{SS}  obtained the convergence
of the  Yamabe flow on a general compact manifold under
a suitable Kazdan-Warner type of condition that rules out the formation of bubbles
and this condition is verified  (via the positive mass Theorem) in dimensions $3 \leq n \leq 5$.
 The convergence  result,   in its full generality, was established by
 Brendle  \cite{S2} and \cite{S1}   (up to a technical assumption, in dimensions $n \ge 6$,   on the rate of vanishing of Weyl tensor at the points at which it vanishes):
starting with any smooth  metric on a compact manifold, the normalized Yamabe flow   converges to a metric of constant scalar curvature.

Although   the  Yamabe flow on compact manifolds is well understood,  the complete non-compact case is  unsettled. Even though the analogue of Perelman's monotonicity formula is still lacking for the Yamabe flow, one expects that gradient Yamabe soliton solutions  model finite and infinite time singularities. These are special solutions $g=g_{ij}$ of the Yamabe flow 
\eqref{eq-yamabe} for which there exist a 
{\em potential  function $P(x,t)$}  so that
$$(R-\rho)g_{ij} = \nabla_i\nabla_j P, \qquad \rho \in \{1,-1,0\} $$
where the covariant derivatives on the right hand side are taken with respect to metric $g(\cdot,t)$. 
Depending on the sign of the constant $\rho$,  a Yamabe soliton is  called a {\em Yamabe shrinker}, a {\em Yamabe expander} 
or a { Yamabe steady soliton}  if $\rho=1,-1$ or $0$ respectively. 

The classification of locally conformally flat Yamabe solitons
with positive  sectional curvature was  established in \cite{DS} (see also \cite{CSZ} and \cite{CMM}). 
It is shown  in \cite{DS} that such solitons are globally conformally equivalent to $\R^n$ and   correspond to self-similar solutions of the fast-diffusion equation 
\begin{equation}\label{eq-flatyamabe}
u_t = \fr{n-1}{m} \Delta u^m, \qquad  m=\fr{n-2}{n+2}
\end{equation}
satisfied by the conformal factor defined by $ g_{ij}=u^{\fr{4}{n+2}} \delta_{ij} $.   A complete description of those solutions is  given in \cite{DS}. 
In \cite{CSZ} the assumption of positive sectional curvature was relaxed to that of nonnegative Ricci curvature.

The works   \cite{DKS, DS}  address the  {\em singularity formation}  of  complete non-compact solutions to the conformally flat Yamabe flow whose conformal factors 
have {\em cylindrical behavior at infinity}. It was shown  in these works  that the  singularity profiles of such solutions are Yamabe solitons
which are  determined  by the second order asymptotics at infinity of the initial data
which is  matched  with  that of  the  corresponding self-similar solution. 
The  solutions  may become  extinct at the extinction
time $T$ of the cylindrical tail  or may live longer than $T$. In  the first case,  the singularity profile  is described by a {\em Yamabe shrinker}  that becomes extinct at time $T$. 
This result can be seen as a stability result around the Yamabe shrinkers with cylindrical behavior at infinity. 
In the second  case, the flow develops a singularity at time $T$  which is described by a {\em singular}  Yamabe shrinker slightly before $T$
and by  a matching  {\em Yamabe expander}  slightly after $T$. All such singularities are of {\em type I}.

%

In this paper, we address singularities which are modeled  on {\em Yamabe steady solitons}.  In Theorem \ref{thm-1}, we  find a condition on 
a conformally flat initial data $g_0= u^{\fr{4}{n+2}}_0 \delta_{ij} $ under  which  
the Yamabe flow  converges, as $t \to +\infty$,  to a steady gradient soliton.  In   Theorem \ref{thm-2}, we 
we study a more general class  of  non-smooth initial data. 
In the Section 4,  we  provide conditions on  a complete non-compact and conformally flat initial data $g_0= u^{\fr{4}{n+2}}_0 \delta_{ij} $ which  guarantee 
that the Yamabe flow  will form a   type II  singularity. We show the existence of both {\em finite time} and {\em infinite time}
type II singularities in Theorems \ref{thm-typeII-1} and \ref{thm-typeII-2}, respectively. To our knowledge, this is the first time that a  type II singularity has been shown to exist in the Yamabe flow.  

In what follows we will simply say that a metric $g_{ij}$  is conformally flat if it is globally conformally flat over $\R^n$,
namely $g_{ij}=u^{\fr{4}{n+2}}\delta_{ij}$ for a conformal factor $u$ defined on $\R^n$, and we will often use the notation $(\R^n, g_{ij})$ to denote such 
a metric.

It was shown in \cite{DS} and \cite{H},  that if $g_{ij}=u^{\fr{4}{n+2}}\delta_{ij}$ is a  conformally flat Yamabe steady gradient soliton with positive sectional curvature,   then u  is a smooth entire and rotationally symmetric  solution of the  elliptic equation 
\begin{equation}\label{eq-u}
\fr{n-1}{m}\Delta u^m  + \beta x \cdot \nabla  u+ \gamma u=0, \qquad \mbox{on} \,\, \R^n
\end{equation} 	
for parameters 
 \begin{equation} \beta\ge0 \qquad  \text{ and } \qquad \gamma=\fr{2\beta}{1-m}.
\end{equation}
Moreover, it follows by the results in \cite{DS} that for  each $\beta>0$ and $\gamma=\fr{2\beta}{1-m}$, the equation \eqref{eq-u} admits one parameter family of rotationally symmetric solutions $(u_{\beta,\lambda})_{\lambda>0}$,  satisfying the asymptotic behavior 
$${\ds u_{\beta, \lambda} ^{1-m} \sim O\left( \fr{\ln |x|}{|x|^2}\right)}, \qquad \mbox{as} \,\, |x| \to +\infty$$ 
which are  uniquely determined by their  value at the origin, 
that is 
\begin{equation}\label{eqn-lambda}
u_{\beta,\lambda}(0)=\lambda. 
\end{equation} 
It should be noted that for a fixed $\beta >0$,  ${\ds u(t)=u_{\beta, e^{-\gamma t}}}$,  for  $t \in (-\infty,\infty)$ is a solution of the Yamabe flow 
and hence $\lambda$ is just a time dilation parameter. 
Moreover, all $(\R^n,u^{1-m}_{\beta,\lambda}\delta_{ij})$ are isometric to each other by conformal changes $x\to ax$, $a>0$.

Hsu, in \cite{H}  obtained the  first order decay rate at infinity  of a Yamabe steady soliton $u_{\beta,\lambda}$. Namely, it was shown that  
 \begin{equation}\label{eq-firstdecay}
\lim_{|x|\to\infty} \fr{|x|^2 {u_{\beta,\lambda}}^{1-m}}{\ln|x|} = \fr{(n-1)(n-2)}{\beta}.
\end{equation}
In order to study the stability around these solitons, it is necessary to establish their  second order asymptotics at infinity. 
In Section \ref{sec-lowerorder}, we establish such asymptotics  showing that  second order decay depends on the parameter $\lambda$.

In Section \ref{sec-stability}, we  prove that if an initial conformally flat metric is asymptotically close to a  steady soliton $u_{\beta,\lambda}$ up to the second order, namely  $$u_0^{1-m} = \fr{1}{r^2} \left(  \fr{(n-2)(n-1)}{\beta} \ln r + K+ o(1) \right)$$ for some $K \in \R$, then  the rescaled solution $\bar u(x,t)=e^{\gamma t}u(e^{\beta t}x,t)$ converges, as $t \to +\infty$,  to  $u_{\beta,\lambda}$. The constant $\lambda$ is determined by $K$
through the second order decay rate  at infinity  of $u_{\beta, \lambda}$, namely ${\ds K=\fr{2\ln \lambda}{n+2}+\fr{\ln \beta}{2}+ \kappa(n)}$, for some universal constant $\kappa=\kappa(n)$.


Finally,  in Section \ref{sec-type2}  we construct examples of complete noncompact and globally conformally flat solutions 
of the  Yamabe flow which develop  type II singularities.  It has been  observed in \cite{DKS} that a  conformally flat Yamabe gradient shrinker $v_{\beta,\lambda}$ which vanishes at time $T$,
satisfies the asymptotic behavior  $$|x|^2\, v_{\lambda}^{1-m}(x)=(n-1)(n-2) \, T -B_\lambda \, |x|^{-\gamma}+o (|x|^{-\gamma}),
\qquad \mbox{as}\,\, |x| \to +\infty.$$ The key point is that the decay rate $\gamma>0 $ depends only on $\beta>0$, which is related with the scalar curvature at the tip (where the maximum scalar curvature occurs)  and this $\gamma:=\gamma(\beta) \to 0$,  as $\beta \to \infty$.  Thus, one may  guess that a solution may  develop a type II singularity,  if  its initial data  has slower second order  decay rate than any Yamabe shrinkers. We will therefore choose, for any given $T>0$,  an initial data 
$g_0 = u^\fr{4}{n+2}_0 \delta_{ij}$ such that the tail of $|x|^2 \, u_0^{1-m}(x)-(n-1)(n-2)\, T$ decays slower than any  power $|x|^{-\gamma}$,  with $\gamma>0$, and prove that the solution with initial data $g_0$ will develop a type II singularity at its   extinction time $T$.  This idea is similar to  that in \cite{HR} where Hamel and Roques found an accelerating fast front propagation for the  KPP type equation $u_t= u_{xx} + f(u)$ and  for slowly decaying initial data. 
We will also find a class of initial data for which the Yamabe flow  develops a type II singularity,  as $t \to +\infty$.  

\section{Lower Order Asymptotics}\label{sec-lowerorder}
In this section, we will derive the  second and third order asymptotics of conformally flat radial steady gradient solitons $u_{\beta,\lambda}$, as $r=|x| \to +\infty$. As we saw in the introduction these are solutions of the elliptic equation 
\begin{equation}\label{eq-u2}
\fr{n-1}{m}\Delta u^{m}  + \beta x \cdot \nabla  u+ \gamma u=0, \qquad \mbox{on} \,\, \R^n, \qquad m=\frac{n-2}{n+2}
\end{equation} 	
with parameters 
$$\beta\ge 0 \qquad \mbox{and} \qquad     \gamma=\fr{2\beta}{1-m}$$
and for each $\beta >0$ they are uniquely determined by  their value at the origin $\lambda:=u_{\beta,\lambda}(0)$.  

For the remaining of the section we fix $\beta >0$ and $\lambda >0$ and set for simplicity   $u(r) :=u_{\beta,\lambda}(x)$, $r=|x|$.  It is 
convenient to work in cylindrical coordinates $s=\log r$. Using this change,  
the radial metric $g=u(r)^{1-m}\, dx^2$ is expressed as $$ g=u(r)^{1-m}(dr^2+r^2g_{S^{n-1}}) = w(s) (ds^2 +g_{S^{n-1}})$$ where 
$$w(s)=r^2 u(r)^{1-m}.$$ 

Using that $m=\frac{n-2}{n+2}$, we find by direct calculation that  \eqref{eq-u2}   translates into  the following equation for $w$
\begin{equation}\label{eq-w}
w_{ss}=\fr{6-n}4  \cdot \fr{w_s^2}{w}+\left(n-2-\fr{\beta}{n-1} w_s  \right)\, w, \qquad \text{for } \,\, s\in(-\infty,\infty).
\end{equation}

We recall in the next Proposition  previous  results regarding the first order asymptotics of $w$ which were shown in \cite{DS} and \cite{H}. 
4
\begin{prop}[\cite{DS}, \cite{H}] \label{prop-prelim} For a conformally flat and radially symmetric steady gradient soliton $w$, we have  $w>0$ and $w_s>0$ for all $s\in\R$. Moreover, there are positive constants $C_1$ and $C_2$ such that $C_1\le w_s \le C_2$,  for $s\ge0$ and 
\begin{equation}\label{eqn-ws}
w_s \to \fr{(n-1)(n-2)}{\beta}, \qquad \text{ as } \, s\to \infty.
\end{equation}
\end{prop}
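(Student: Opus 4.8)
The plan is to read off the two positivity statements directly from the structure of \eqref{eq-w} and the already recorded first order decay \eqref{eq-firstdecay}, and then to pin down the sharp limit by rewriting \eqref{eq-w} so that the value $L:=\fr{(n-1)(n-2)}{\beta}$ appears as the unique equilibrium of a restoring force.

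First I would settle $w>0$ and $w_s>0$. Since $u=u_{\beta,\lambda}$ is a positive entire solution and $w=r^2u^{1-m}$ with $r=e^s$, positivity $w>0$ for all $s\in\R$ is immediate. For $w_s>0$ I would first record the behavior as $s\to-\infty$: smoothness and rotational symmetry give $u(r)=\lambda+O(r^2)$, hence $w=\lambda^{1-m}e^{2s}+O(e^{4s})$ and $w_s=2\lambda^{1-m}e^{2s}+O(e^{4s})>0$ for $s\ll 0$. Suppose $s_0$ were the first zero of $w_s$, so that $w_s>0$ on $(-\infty,s_0)$ and $w_s(s_0)=0$. Evaluating \eqref{eq-w} at $s_0$ gives $w_{ss}(s_0)=(n-2)\,w(s_0)>0$, whereas $w_s>0$ to the left of $s_0$ with $w_s(s_0)=0$ forces $w_{ss}(s_0)\le 0$, a contradiction. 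Hence $w_s>0$ for all $s$.

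The key algebraic observation is that, using $\fr{\beta}{n-1}L=n-2$, the equation \eqref{eq-w} can be rewritten as
\[ w_{ss}=\fr{6-n}{4}\cdot\fr{w_s^2}{w}-\fr{\beta}{n-1}\,w\,(w_s-L),\qquad L:=\fr{(n-1)(n-2)}{\beta}. \]
The second term is a restoring force driving $w_s$ toward $L$, with a coefficient $\fr{\beta}{n-1}\,w$ that blows up as $s\to\infty$; moreover, in cylindrical coordinates the decay \eqref{eq-firstdecay} becomes exactly $w(s)/s\to L$. It therefore suffices to show that $w_s$ converges as $s\to\infty$: because $w(s)/s\to L$, the limit of $w_s$ is then forced to equal $L$, and the two-sided bound $C_1\le w_s\le C_2$ on $[0,\infty)$ follows from continuity together with this limit.

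To prove convergence I would analyze the sign of $w_{ss}$ through the displayed identity. For $s$ large (so that $w$ is large) the restoring term dominates the $\fr{6-n}{4}\,w_s^2/w$ term whenever $w_s$ stays in a bounded range, so that $w_s<L$ forces $w_{ss}>0$ and, in the regime where the quadratic term cannot overturn the restoring term, $w_s>L$ forces $w_{ss}<0$; this traps $w_s$ and yields $w_s\to L$. Making the trapping rigorous requires an a priori upper bound on $w_s$, and this is the main obstacle. When $n\ge 6$ the term $\fr{6-n}{4}\,w_s^2/w$ is nonpositive, so at any critical point $w_{ss}=0$ gives $\fr{\beta}{n-1}w(w_s-L)\le 0$; hence every local maximum of $w_s$ (for large $s$) satisfies $w_s\le L$ and the upper bound is automatic. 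For $3\le n\le 5$, however, that term is positive and, at a critical point of $w_s$, the balance $w_s-L=\fr{(6-n)(n-1)}{4\beta}\,w_s^2/w^2$ admits a spurious large root $w_s\sim w^2$; ruling out such spikes is the delicate point, and I would do so by a barrier/comparison argument exploiting that tall spikes are incompatible with the linear growth $w(s)\sim Ls$ coming from \eqref{eq-firstdecay}, or else invoke directly the corresponding estimates of \cite{DS} and \cite{H}. Once $w_s$ is bounded above, the restoring mechanism closes the argument and delivers $w_s\to L$ together with the claimed bounds.
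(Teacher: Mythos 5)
Note first that the paper contains no proof of this proposition: it is explicitly \emph{recalled} from \cite{DS} and \cite{H} (``We recall in the next Proposition previous results\dots''), so there is no internal argument to compare against and your attempt must stand on its own. Much of it does. The positivity of $w$, the first-zero argument for $w_s>0$ (at a first zero $s_0$, equation \eqref{eq-w} gives $w_{ss}(s_0)=(n-2)\,w(s_0)>0$, while $w_s>0$ to the left of $s_0$ forces $w_{ss}(s_0)\le 0$), the rewriting of \eqref{eq-w} around the equilibrium $L:=\fr{(n-1)(n-2)}{\beta}$, and the trapping mechanism are all correct; for $n\ge 6$ the sign of $\fr{6-n}{4}\fr{w_s^2}{w}$ does make the required upper bound automatic. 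Your use of \eqref{eq-firstdecay} is also not circular: $w/s\to L$ is strictly weaker than $w_s\to L$ and is established independently in \cite{H}.

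The genuine gap is exactly where you place it, the dimensions $3\le n\le 5$, and neither of your proposed repairs closes it. Invoking ``the corresponding estimates of \cite{DS} and \cite{H}'' is not a repair: those estimates \emph{are} this proposition, which is precisely why the paper cites rather than proves it. And the heuristic that ``tall spikes are incompatible with $w\sim Ls$'' does not follow from \eqref{eq-firstdecay} alone, because $w/s\to L$ constrains $w_s$ only in an averaged sense: an excursion of $w_s$ to great height, compensated by long stretches where $w_s<L$, is not obviously inconsistent with the first-order asymptotics; excluding it requires using the ODE quantitatively, and that is the missing content. Two ways to supply it: (i) a rate-of-rise estimate — dropping the negative term in \eqref{eq-w} gives $w_{ss}\le\fr{6-n}{4}\fr{w_s^2}{w}+(n-2)w$, and integrating this Riccati inequality shows that any excursion of $w_s$ from $2L$ up to height $\delta w$ must occupy an $s$-interval of length at least $\tau(\delta)>0$, during which $w$ gains at least $c(\delta)\,s$, forcing $\limsup_{s\to\infty} w/s\ge L+c(\delta)$ and contradicting \eqref{eq-firstdecay}; or (ii), cleaner and independent of \eqref{eq-firstdecay}, a phase-plane analysis of $p:=w_s$ as a function of $w$, where $\fr{dp}{dw}=\fr{6-n}{4}\fr{p}{w}+\fr{(n-2)w}{p}-\fr{\beta}{n-1}w$. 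The zero-isocline has two branches, $p_-(w)\to L$ and $p_+(w)\sim\fr{4\beta}{(6-n)(n-1)}w^2$; above $p_+$ one has $\fr{d\ln p}{d\ln w}\le\fr{6-n}{4}+o(1)<2$, so the trajectory always falls back below $p_+$ (this is what rules out your ``spurious large root'' scenario), after which $p$ decreases monotonically until it enters the strip $|p-L|<\epsilon$, where your trapping applies. One of these arguments is the actual mathematical substance of the case $3\le n\le 5$; without it, your proof is incomplete precisely in those dimensions.
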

To find the second order asymptotics of $w$ we set  
$$w:=\fr{(n-1)(n-2)}{\beta}s +h.$$
Plugging this into \eqref{eq-w}, we obtain the following equation for $h$ 
\begin{equation}\label{eqn-h1} 
L h:=h_{ss}+ (n-2) \, s\, h_s=\fr{6-n}4 \cdot \fr{w_s^2}{w}- \frac{\beta}{n-1} \,  h\, h_s=:f(s).
\end{equation}

The signs of  $h_{ss}=w_{ss}$ and ${\ds h_s=w_s-\fr{(n-1)(n-2)}{\beta}}$ can be determined,  depending on dimension $n$ as shown next. 

\begin{prop}\label{prop-wss}
For $n\ge6$, $h_{ss}=w_{ss}>0$ and $h_s<0$ for all $s\in (-\infty,\infty)$. For $3\le n<6$, there exist $s_0>0$ such that $h_{ss}=w_{ss}<0$ and $h_s>0$  for all $s>s_0$.
\end{prop}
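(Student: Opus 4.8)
The plan is to reduce the whole statement to two pointwise identities coming from \eqref{eq-w} and then run first-crossing (maximum-principle type) arguments anchored at $s=-\infty$. Set $a:=\fr{(n-1)(n-2)}{\beta}$, so that $\fr{\beta}{n-1}\,a=n-2$ and the zeroth-order coefficient in \eqref{eq-w} rewrites as $n-2-\fr{\beta}{n-1}w_s=-\fr{\beta}{n-1}h_s$. Thus \eqref{eq-w} becomes $w_{ss}=\fr{6-n}{4}\fr{w_s^2}{w}-\fr{\beta}{n-1}\,h_s\,w$, which I will use throughout. First I would record the boundary behaviour. Near infinity Proposition \ref{prop-prelim} gives $w_s\to a$, i.e. $h_s\to 0$. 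Near $-\infty$, using that $u$ is smooth and rotationally symmetric at the origin, $w=r^2u^{1-m}\sim\lambda^{1-m}e^{2s}$, so $w_s\to 0$, $w_{ss}>0$, and $h_s=w_s-a\to-a<0$. The sign of $h_s$ at $-\infty$ is the anchor for everything that follows.

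The heart of the argument is two computations. Evaluating the rewritten equation at any point where $h_s=0$ (hence $w_s=a$) gives $w_{ss}=\fr{6-n}{4}\fr{a^2}{w}$, whose sign is exactly that of $6-n$. Differentiating \eqref{eq-w} once and evaluating at any point where $w_{ss}=0$, then using the relation $\fr{\beta}{n-1}h_s=\fr{6-n}{4}\fr{w_s^2}{w^2}$ valid there to eliminate $h_s$, gives $w_{sss}=\fr{n-6}{2}\fr{w_s^3}{w^2}$, whose sign is that of $n-6$. I would present these two identities as a short lemma, since both sign claims follow from them almost mechanically.

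For $n>6$ I would first show $h_s<0$ for all $s$: otherwise let $s_1$ be the first zero of $h_s$; since $h_s<0$ on $(-\infty,s_1)$ we get $h_{ss}(s_1)\ge 0$, but $h_{ss}(s_1)=w_{ss}(s_1)=\fr{6-n}{4}\fr{a^2}{w(s_1)}<0$, a contradiction. Then I would show $w_{ss}>0$ everywhere: otherwise let $s_2$ be the first zero of $w_{ss}$ (it is positive near $-\infty$), so $w_{sss}(s_2)\le 0$, while the second identity gives $w_{sss}(s_2)=\fr{n-6}{2}\fr{w_s^3}{w^2}>0$, again a contradiction. The borderline $n=6$ must be treated separately because both identities degenerate to $0$: here \eqref{eq-w} reads $h_{ss}=-\fr{\beta}{5}\,w\,h_s$, a linear homogeneous ODE for $h_s$, so $h_s$ cannot change sign; being negative near $-\infty$ it stays negative, and then $w_{ss}=-\fr{\beta}{5}\,w\,h_s>0$.

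For $3\le n<6$ the same identities flip sign. Now every zero of $h_s$ satisfies $h_{ss}>0$, so it is an upcrossing and $h_s$ has at most one zero; it must have one, for if $h_s<0$ everywhere then $w_{ss}>0$ everywhere with $w_{ss}\ge\fr{6-n}{4}\fr{w_s^2}{w}$, and since $w_s$ is bounded below and $w=O(s)$ this forces $\int^{\infty}w_{ss}\,ds=+\infty$, contradicting $w_s\to a$. Hence $h_s$ crosses zero exactly once, at some $s_0^{(h)}$, and is positive afterwards. Likewise every zero of $w_{ss}$ has $w_{sss}<0$, a downcrossing, so $w_{ss}$ has at most one zero; it is positive near $-\infty$ and cannot be positive everywhere (that would give $h_s<0$ everywhere), so it has exactly one zero $s_*$ and is negative beyond it. Since $w_{ss}(s_0^{(h)})=\fr{6-n}{4}\fr{a^2}{w}>0$ we have $s_0^{(h)}<s_*$, and taking $s_0:=\max(s_*,1)>0$ gives $w_{ss}<0$ and $h_s>0$ for all $s>s_0$. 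The main obstacle is precisely that the sign of $w_{ss}$ near $+\infty$ is invisible to leading-order asymptotics, because the two terms of the rewritten equation cancel to leading order; routing the proof through the exact critical-point identity $w_{sss}=\fr{n-6}{2}\fr{w_s^3}{w^2}$ is what avoids a delicate expansion, and the cancellation at $n=6$ is what forces the separate linear-ODE observation.
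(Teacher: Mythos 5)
Your proof is correct and takes essentially the same approach as the paper: first-crossing arguments anchored at $s=-\infty$, with the signs at critical points read off from \eqref{eq-w} and its differentiated form \eqref{eq-w_sss}, the same linear-ODE observation for the borderline case $n=6$, and the same $c/s$-divergence contradiction ruling out $w_{ss}>0$ everywhere when $3\le n<6$. The only difference is cosmetic: at a zero of $w_{ss}$ you eliminate $h_s$ to get the exact identity $w_{sss}=\fr{n-6}{2}\fr{w_s^3}{w^2}$, whereas the paper deduces the one-sided bound on $w_s$ from \eqref{eq-w} and substitutes it into \eqref{eq-w_sss} --- the same computation, packaged slightly more cleanly.
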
 
\begin{proof}
Recall that  $w=r^2u^{1-m}>0$. Differentiating  in $s$ gives that $w_s=rw_r=2r^2u^{1-m}+(1-m)r^3u^{-m}u_r$. Using that   
$\lim_{r\to0}u(r)>0$ and $\lim_{r\to0}u_r(r)=0$, we conclude $\lim_{s \to -\infty} w_s \to 0$ and moreover we can check
\begin{equation}\label{eqn-wss}
\fr{w_{ss}}{w}= \fr{6-n}{4}\, \frac{w_s^2}{w^2} +(n-2) -\fr{\beta}{n-1}w_s \rightarrow 4, \quad  \text{ as } \,\, s\to -\infty.
\end{equation}
Hence, for any dimension $n$, we have $w_{ss} >0$ near $s=-\infty$.  Assume now that  $n >6$.  We will show that $w_{ss} >0$ is preserved  for all $s \in \R$. 
To this end, we   differentiate  \eqref{eq-w} in $s$ and obtain   
\begin{equation}\label{eq-w_sss}
(w_{ss})_s=\fr{6-n}4 \, \left( \fr{2w_s w_{ss}}{w}-\fr{w_s^3}{w^2}\right)+\left(n-2-\fr{\beta}{n-1} w_s  \right)w_s-\fr{\beta}{n-1}w_{ss}w.
\end{equation} 
Suppose $w_{ss}=0$ at some $s=\bar s$.  From  \eqref{eq-w}, using $6-n<0$, $w>0$ and $w_s>0$, we get 
$$0<w_s(\bar s) <\fr{(n-2)(n-1)}{\beta}.$$  Plugging this bound into \eqref{eq-w_sss}, gives   $(w_{ss})_s(\bar s) >0$. 
Hence,  $w_{ss}>0$ for all $s\in \R$, implying that $h_{ss}=w_{ss} >0$. Since, from Proposition \ref{prop-prelim} we have 
$$h_s = w_s - \fr{(n-2)(n-1)}{\beta} \to 0, \qquad \mbox{as} \,\,\, s\to \infty$$
 we also conclude that $h_s<0$. 

When $n=6$, equation \eqref{eq-w} could be viewed as a 1st order linear equation of $w_s$ assuming $w$ is given. Hence we can 
integrate from $s=-\infty$ and use  $\lim_{s \to -\infty} w_s=0$ to obtain 
 $$w_s(s)=\fr{(n-1)(n-2)}{\beta} \big ( 1- e^{  -\fr{\beta}{n-1}\int_{-\infty}^sw(l) \, dl } \,  \big )$$
 from which the bounds 
$$w_{ss}>0 \qquad \mbox{and} \qquad w_s <\fr{(n-1)(n-2)}{\beta}, \quad \mbox{for all }\,\, s\in R$$
readily follow. We conclude in this case that  $h_{ss}=w_{ss} >0$ and $h_s <0$ for all $s \in \R$. 

Finally,  assume that  $3<n<6$. By \eqref{eqn-wss} we have $\lim_{s\to -\infty} {w_{ss}}/{w}=4>0$.  If we have $w_{ss}(s_0) =0$ 
at some $s=s_0$, then  by \eqref{eq-w} and because $3<n<6$ at this time, we have 
${\displaystyle w_s(s_0) > \fr{(n-2)(n-1)}{\beta}}$
and hence $(w_{ss})_s(s_0) <0 $ by \eqref{eq-w_sss}. 
Thus,  $w_{ss}(s)<0$, for all $s >s_0$. 
On the contrary, if  we don't have such a point $s_0$, then  $h_{ss}=w_{ss}>0$ for all $s\in \R$. 
Thus  \eqref{eqn-ws}  implies that ${\ds w_s < \fr{(n-2)(n-1)}{\beta}}$ and hence 
${\ds w_{ss} > \fr{6-n}4 \fr{w_s^2}{w}}$ from equation \eqref{eq-w}. But since  ${\ds w_s \nearrow \fr{(n-1)(n-2)}{\beta}}$, as $ s \to +\infty$,
 there is some $c>0$ $$w_{ss} > \fr{6-n}4 \,  \fr{w_s^2}{w}>\fr{c}{s}, \quad \mbox{for} \,\, s >>1$$ 
implying that  $w_s \to \infty$,   a contradiction. We conclude that $w_{ss} <0$, for $s \geq s_0$,
for some $s_0 \in \R$, implying that $h_{ss} <0$ for all $s \geq s_0$.  Since, $w_{ss} <0$
and \eqref{eqn-ws} holds, we must also have $h_s >0$ for all $s > s_0$. 
This finishes the proof of the proposition. 
\end{proof}

Employing  the  previous Proposition, we can now  prove the following.

\begin{prop}\label{prop-hs}
For all $n\ge3$, we have 
\begin{equation}\label{eqn-prop-hs}
\lim_{s\to \infty}s^2 \, h_s=\fr{(6-n)(n-1)}{4\beta}.
\end{equation}
\begin{proof}
Begin by observing that by Proposition \ref{prop-prelim}, we have $h_s\to 0$ as $s\to \infty$, implying that  $|h|=o(s)$.
For $n\ge6$, we showed in Proposition \ref{prop-wss} that $h_s<0$. Therefore, it  follows from  \eqref{eqn-h1}, that for all $\epsilon>0$, there exists $ s_0>0$ such that for $s\ge s_0$ 
\begin{equation}\label{eq-hss}
L_\e h: = h_{ss}+\left((n-2)-\epsilon \right) sh_s = -\epsilon \, sh_s- \fr{\beta}{n-1}h h_s + \fr{6-n}4\fr{w_s^2}{w}\ge \fr{6-n}4\fr{w_s^2}{w}.\end{equation}
Multiplying  by $\exp \left ({\fr{s^2}{2} \left((n-2)-\epsilon\right)}\right )$ and integrating  from $s_0$ to $s$, we get 
\begin{equation}\label{eq-hs}\left  [h_s(l) \exp \left  ({\fr{l^2}{2} \big ((n-2)-\epsilon\big )}\right )\right]^s_{l=s_0}\ge \int_{s_0}^s \fr{6-n}4\fr{w_s^2}{w}\exp\left ({\fr{l^2}{2} \left((n-2)-\epsilon\right)}\right ) dl.\end{equation}
Setting  $I(s):=s^2 \exp \left ( {-\fr{s^2}{2} \big ((n-2)-\epsilon\big )}\right)$ and taking $\liminf_{s\to\infty}$, on  the LHS of \eqref{eq-hss}, gives 
$$ \liminf_{s\to\infty} \, [\mbox{I(s)} \,  \text{LHS}\eqref{eq-hs}]=\liminf_{s\to\infty} s^2 h_s.$$
For the RHS of \eqref{eq-hss}, we can apply   L'H\^{o}pital's  rule  to obtain
$$\begin{aligned}\lim_{s\to\infty} \, [\mbox{I(s)}\,  \text{RHS} \eqref{eq-hs}]&=\lim_{s\to\infty} \fr{\fr{6-n}4\fr{w_s^2}{w}\exp\left ({\fr{s^2}{2} \left((n-2)-\epsilon\right)}\right) }{\fr{\partial}{\partial s} \left [ s^{-2}\exp\left ({ \fr{s^2}{2} \left((n-2)-\epsilon\right)}\right ) \right ] }\\&= \fr{(6-n)(n-2)(n-1)}{4\beta}\big((n-2)-\epsilon\big)^{-1}. \end{aligned}$$
In the last equality, we used that ${\ds w_s \to \fr{(n-1)(n-2)}{\beta}}$  and ${\ds \frac{w}s \to \fr{(n-1)(n-2)}{\beta}} $ as $s\to \infty$.
Combining both sides, gives 
$$\liminf_{s\to\infty} s^2 h_s \ge \fr{(6-n)(n-2)(n-1)}{4\beta} \, \big((n-2)-\epsilon\big)^{-1} . $$
By taking $\epsilon\downarrow0$, we obtain  ${\ds \liminf_{s\to\infty} s^2 h_s \ge\fr{(6-n)\, (n-1)}{4\beta}}$. If we
chose $\epsilon<0$ in the beginning,  then we get the reversed inequality in \eqref{eq-hss} and  the same argument, 
yields  ${\ds \limsup_{s\to\infty} s^2 h_s \le \fr{(6-n)\, (n-1)}{4\beta}}$. We conclude that  \eqref{eqn-prop-hs} holds. 
For the remaining cases $3\le n <6$, once we choose $s_0$ so that $h_s>0$ on $[s_0,\infty)$, again a similar argument leads to the same conclusion. \end{proof}
\end{prop}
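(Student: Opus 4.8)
The plan is to exploit the fact that, once the leading linear growth has been subtracted, equation \eqref{eqn-h1} for $h$ is \emph{first order linear in $h_s$}. Reading $Lh = h_{ss} + (n-2)s\,h_s = f(s)$ as an ODE for the unknown $h_s$, the natural integrating factor is $\exp(\fr{n-2}{2}s^2)$. Before anything rigorous, a dominant-balance heuristic already predicts the constant: for large $s$ the term $h_{ss}$ is of lower order than $(n-2)s\,h_s$, so the equation is approximately $(n-2)s\,h_s \approx \fr{6-n}{4}\fr{w_s^2}{w}$; using $w_s \to \fr{(n-1)(n-2)}{\beta}$ and $w \sim \fr{(n-1)(n-2)}{\beta}s$ (both consequences of Proposition \ref{prop-prelim}), the right side behaves like $\fr{6-n}{4}\cdot\fr{(n-1)(n-2)}{\beta s}$, and solving for $h_s$ gives exactly $s^2 h_s \to \fr{(6-n)(n-1)}{4\beta}$. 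The whole task is to justify this balance rigorously, the obstruction being the two error terms hidden in $f$: the nonlinear product $-\fr{\beta}{n-1}h\,h_s$, and the fact that $\fr{w_s^2}{w}$ only \emph{approaches} its limiting profile.

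The main obstacle, and the step I expect to require the most care, is controlling the nonlinear term $\fr{\beta}{n-1}h\,h_s$: a priori we only know $h_s\to 0$, hence $|h| = o(s)$, which by itself does not guarantee that $h\,h_s$ is negligible against $s\,h_s$ in the leading balance. The device I would use is to perturb the linear operator. For a small parameter $\e$, set $L_\e h := h_{ss} + ((n-2)-\e)s\,h_s$, so that \eqref{eqn-h1} becomes $L_\e h = \fr{6-n}{4}\fr{w_s^2}{w} - h_s\big(\e s + \fr{\beta}{n-1}h\big)$. Here the dimension-dependent sign of $h_s$ from Proposition \ref{prop-wss} enters decisively: since $|h|=o(s)$, the bracket $\e s + \fr{\beta}{n-1}h$ has the sign of $\e$ for $s$ large, and combined with the known sign of $h_s$ this forces the correction $-h_s\big(\e s + \fr{\beta}{n-1}h\big)$ to have a definite sign. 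Choosing the sign of $\e$ appropriately (for $n\ge 6$, where $h_s<0$, taking $\e>0$), I obtain a clean one-sided differential inequality $L_\e h \ge \fr{6-n}{4}\fr{w_s^2}{w}$ on some $[s_0,\infty)$, with the nonlinearity now safely absorbed.

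From this inequality the extraction of the constant is routine. Multiplying by the integrating factor $\exp(\fr{s^2}{2}((n-2)-\e))$, integrating from $s_0$ to $s$, and then multiplying through by $I(s):= s^2\exp(-\fr{s^2}{2}((n-2)-\e))$, the boundary contribution at $s_0$ carries a factor $s^2\exp(-\fr{s^2}{2}((n-2)-\e))$ that decays to $0$, so the left side tends to $\liminf_{s\to\infty} s^2 h_s$. The right side is a ratio of a divergent integral to the divergent quantity $\exp(\fr{s^2}{2}((n-2)-\e))/s^2$, so L'H\^{o}pital's rule applies; differentiating and again using $w_s\to\fr{(n-1)(n-2)}{\beta}$ together with $\fr{w}{s}\to\fr{(n-1)(n-2)}{\beta}$ yields the value $\fr{(6-n)(n-2)(n-1)}{4\beta\,((n-2)-\e)}$. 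Letting $\e\downarrow 0$ gives $\liminf_{s\to\infty}s^2 h_s \ge \fr{(6-n)(n-1)}{4\beta}$.

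Finally, to close the argument I would repeat the computation with $\e$ of the opposite sign, which reverses the differential inequality to $L_\e h \le \fr{6-n}{4}\fr{w_s^2}{w}$ and produces the matching bound $\limsup_{s\to\infty}s^2 h_s \le \fr{(6-n)(n-1)}{4\beta}$; together these give \eqref{eqn-prop-hs}. The cases $3\le n<6$ are handled identically after first passing to an interval $[s_0,\infty)$ on which $h_s>0$ (Proposition \ref{prop-wss}), the only change being that the roles of $\e>0$ and $\e<0$ are exchanged. The two points I would double-check are that the perturbed integrating factor still dominates $s^2$ so that every boundary term genuinely vanishes, and that the L'H\^{o}pital limit is insensitive to the lower-order $-2/s^3$ term that appears when differentiating $\exp(\fr{s^2}{2}((n-2)-\e))/s^2$.
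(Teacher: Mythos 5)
Your proposal is correct and follows essentially the same route as the paper's proof: the same perturbed operator $L_\e h = h_{ss} + ((n-2)-\e)s\,h_s$, the same use of the sign of $h_s$ from Proposition \ref{prop-wss} together with $|h|=o(s)$ to absorb the nonlinear term, the same integrating factor and L'H\^{o}pital computation, and the same exchange of the sign of $\e$ for the matching bound and for the cases $3\le n<6$. No gaps; your explicit remark that the bracket $\e s + \fr{\beta}{n-1}h$ inherits the sign of $\e$ is exactly the justification implicit in the paper's inequality \eqref{eq-hss}.
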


\begin{cor}\label{cor-asymp}
There exists a  constant $K=K(n,\beta,\lambda)$ such that 
 ${\ds h(s) = K+ \fr{(n-6)(n-1)}{4\beta} \fr{1}{s}+ o\left(\fr{1}{s}\right)}$, as $s \to +\infty$. It follows that     
$$w(s)=\fr{(n-2)(n-1)}{\beta}s + K + \fr{(n-6)(n-1)}{4\beta} \fr{1}{s} +  o\left(\fr{1}{s}\right) .$$
\begin{proof}
The convergence of $h(s)\to K(n,\beta,\lambda)$  readily follows  from the result in Proposition \ref{prop-hs}.
Define  ${\ds \bar h (s):=h(s)-K-\fr{(n-6)(n-1)}{4\beta}\fr{1}{s}}$ and integrate using again Proposition \ref{prop-hs} we obtain
${\ds | \bar h(s_1)| \le \int_{ s_1}^{\infty} |\bar h_s|\, ds}. $ Hence,  $\lim_{s\to\infty} s^2 \bar h_s =0$ 
yields  $\bar h(s)=o(s^{-1})$.
\end{proof}
\end{cor}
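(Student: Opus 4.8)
The plan is to integrate the derivative asymptotic from Proposition~\ref{prop-hs} twice: once to produce the limit $K$, and a second time, against the explicit leading term, to capture the $1/s$ correction. Throughout I abbreviate $A := \frac{(6-n)(n-1)}{4\beta}$, so that Proposition~\ref{prop-hs} reads $\lim_{s\to\infty} s^2 h_s = A$ and the coefficient claimed in the statement is precisely $-A = \frac{(n-6)(n-1)}{4\beta}$. First I would establish the existence of $K$. From $s^2 h_s \to A$ we get $|h_s(s)| \le 2|A|\, s^{-2}$ for $s$ large, so $h_s$ is absolutely integrable near $+\infty$. Since $h$ is $C^1$ and $h(s) = h(s_0) + \int_{s_0}^s h_\sigma \, d\sigma$, the convergence of $\int^\infty h_\sigma\, d\sigma$ forces $h(s)$ to a finite limit $K = K(n,\beta,\lambda)$ as $s \to \infty$ (consistent with $h_s \to 0$ from Proposition~\ref{prop-prelim}).

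Next, to extract the $1/s$ term, set $\bar h(s) := h(s) - K + \frac{A}{s}$, the remainder after subtracting the predicted expansion. Differentiating gives $\bar h_s = h_s - A s^{-2}$, so $s^2 \bar h_s = s^2 h_s - A \to 0$ by Proposition~\ref{prop-hs}; that is, $\bar h_s = o(s^{-2})$. Moreover $\bar h(s) \to 0$ as $s \to \infty$, since $h \to K$ and $A/s \to 0$. The key step is then an integration from $s$ to $+\infty$: because $\bar h$ vanishes at infinity, $\bar h(s) = -\int_s^\infty \bar h_\sigma \, d\sigma$, whence $|\bar h(s)| \le \int_s^\infty |\bar h_\sigma|\, d\sigma$. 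Given $\epsilon > 0$, choosing $s$ large enough that $|\bar h_\sigma| \le \epsilon\, \sigma^{-2}$ for all $\sigma \ge s$ yields $|\bar h(s)| \le \epsilon \int_s^\infty \sigma^{-2}\, d\sigma = \epsilon/s$. Hence $s\,\bar h(s) \to 0$, i.e. $\bar h(s) = o(1/s)$, which is exactly the claimed expansion of $h$. The stated formula for $w$ then follows by substituting into $w = \frac{(n-1)(n-2)}{\beta} s + h$.

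The arguments are elementary once Proposition~\ref{prop-hs} is in hand; the only point requiring care is the legitimacy of integrating from infinity, which rests on first verifying that both $h$ and $\bar h$ have finite limits there, so that $\int^\infty |\bar h_\sigma|\, d\sigma < \infty$ and the fundamental theorem of calculus applies on $[s,\infty)$. This is the main (mild) obstacle: a naive one-sided estimate would only bound $|\bar h|$ by a constant, and it is the cancellation $s^2 h_s - A \to 0$ — i.e. the precise value of the constant in Proposition~\ref{prop-hs} — that upgrades this to the sharp $o(1/s)$ rate. I would expect no dimensional case distinction to be needed here, since Proposition~\ref{prop-hs} already holds uniformly for all $n \ge 3$, the sign of $A$ merely flipping across $n = 6$.
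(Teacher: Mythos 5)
Your proposal is correct and follows essentially the same route as the paper's proof: subtract the predicted expansion to form $\bar h(s)=h(s)-K+\frac{A}{s}$, observe that $s^2\bar h_s\to 0$ by Proposition \ref{prop-hs}, and integrate on $[s,\infty)$ (legitimate since $\bar h$ vanishes at infinity) to conclude $\bar h=o(1/s)$; your write-up merely fills in details the paper leaves implicit. One cosmetic caveat: the bound $|h_s|\le 2|A|s^{-2}$ is vacuous when $n=6$ (where $A=0$), so it should be replaced by $|h_s|\le (|A|+1)s^{-2}$, which follows from the boundedness of the convergent function $s^2h_s$ and serves the same integrability purpose.
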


\begin{remark} In the special case  $n=6$, it is easy to see that  $|h_s|$ decays exponentially as $s \to +\infty$. 
Indeed,  by  \eqref{eq-hs}  and $h_s <0$,
it follows that  for each small $\epsilon >0 $, there exist two constant $c,C>0$ such that  
$$c \, \exp\big({-\fr{s^2}{2} \left(4+\epsilon\right)}\big ) \le |h_s| \le C \, \exp\left ({-\fr{s^2}{2} \left(4-\epsilon\right)}\right )$$ for large $s\ge s_0$.
\end{remark}

We will next  use the rich scaling properties of our equation \eqref{eq-u2} to  determine the value of the constant 
$K=K(n,\beta,\lambda)$ in Corollary \ref{cor-asymp}, up to an additive constant that  depends  only on 
the dimension $n$ and obtain the main result in this section which describes the asymptotic behavior for any steady soliton $u_{\beta,\lambda}$ up to third order. 
\begin{prop}\label{prop-asymp}
For $\beta >0, \lambda >0$, let  $u_{\beta,\lambda}$ denote the unique radially symmetric solution of equation \eqref{eq-u2} with $u_{\beta,\lambda}(0)=\lambda$. 
Then there exists  a constant  $\kappa=\kappa(n)\in \R$ depending only on dimension $n$  such that 
\begin{equation}\label{eqn-kappa} 
u_{\beta,\lambda} ^{1-m} (r)= \fr{(n-1)(n-2)}{\beta r^2} \left \{  \ln r +\left(\fr{2\ln \lambda}{n+2}+\fr{\ln \beta}{2}+ \kappa(n)  \right)+ \fr{(n-6)}{4(n-2)}\fr{1}{\ln r} +  o\left(\fr{1}{\ln r}\right)  \right \}.
\end{equation} 
\begin{proof}
For a radial solution $u$ of \eqref{eq-u}, the rescaling  $\tilde u(x) = A u(Bx)$ with $A,B>0$ becomes again radial solution of \eqref{eq-u} 
with the same $\beta$ and $\gamma$, if and only if $B=A^{\fr{1-m}{2}}$. These solutions are uniquely determined by their value at the origin. 
Hence, we have  \begin{equation}u_{\beta,\lambda_1}(r)=\fr{\lambda_1}{\lambda_2} u_{\beta, \lambda_2}\left(r\left(\fr{\lambda_1}{\lambda_2}\right)^{\fr{1-m}{2}} \right).\end{equation}
Similarly, by plugging into the equation \eqref{eq-u2}, the uniqueness again implies that 
\begin{equation} u_{\beta_1,\ \left(\fr{\beta_2}{\beta_1} \right)^{\fr{1}{1-m}}\lambda_2}(r)= \left(\fr{\beta_2}{\beta_1} \right)^{\fr{1}{1-m}}u_{\beta_2,\lambda_2} (r).\end{equation} Combining the two scalings above, gives 
\begin{equation}\label{eq-scaling}    u_{\beta,\lambda}(x)=(\lambda\beta^{\fr{1}{1-m}})\, 
u_{\beta,\beta^{-\fr{1}{1-m}}}((\lambda\beta^{\fr{1}{1-m}})^{\fr{1-m}{2}}x )= \lambda \, u_{1,1}((\lambda \beta^{\fr{1}{1-m}})^{\fr{1-m}{2}}x ).\end{equation}
By Corollary \ref{cor-asymp}, there is some $\kappa=\kappa(n)\in\R$ for which 
 $$u_{1,1} ^{1-m}(r) = \fr{(n-1)(n-2)}{r^2} \left(\ln r + \kappa(n) + \fr{n-6}{4(n-2)}\fr{1}{\ln r} +  o\left(\fr{1}{\ln r}\right) \right).$$ Direct computation using \eqref{eq-scaling} implies that \eqref{eqn-kappa} holds. 
\end{proof}
\end{prop}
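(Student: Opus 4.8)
The plan is to reduce the general soliton $u_{\beta,\lambda}$ to the single canonical soliton $u_{1,1}$ using the scaling symmetries of equation \eqref{eq-u2}, and then read off $\kappa(n)$ from the expansion already supplied by Corollary \ref{cor-asymp}. The first step is to catalog those symmetries. Substituting the ansatz $\tilde u(x)=A\,u(Bx)$ into \eqref{eq-u2}, the term $\frac{n-1}{m}\Delta u^m$ picks up a factor $A^{m}B^{2}$ while the drift and zeroth-order terms pick up a factor $A$; using the equation to replace $\frac{n-1}{m}\Delta u^m$ by $-\beta\,y\cdot\nabla u-\gamma u$ one finds that $\tilde u$ solves \eqref{eq-u2} with the new parameter $\tilde\beta=A^{m-1}B^{2}\beta$ (and $\tilde\gamma=\frac{2\tilde\beta}{1-m}$). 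Two special cases generate everything: the choice $B=A^{(1-m)/2}$ makes $A^{m-1}B^{2}=1$, giving the pure dilation that fixes $\beta$, while the choice $B=1$ gives $\tilde\beta=A^{m-1}\beta$, the scaling that moves $\beta$. Since each radial solution is uniquely determined by its value $u(0)=\lambda$ at the origin, matching values at the origin turns these formal scalings into exact identities, and composing them yields
\[
u_{\beta,\lambda}(x)=\lambda\,u_{1,1}\bigl((\lambda\beta^{1/(1-m)})^{(1-m)/2}\,x\bigr).
\]

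With this master identity in hand I would define $\kappa(n)$ directly from Corollary \ref{cor-asymp} applied to $u_{1,1}$. Since $\beta=1$ there, the corollary produces a constant $K(n,1,1)$, and I set $\kappa(n):=K(n,1,1)/[(n-1)(n-2)]$, so that
\[
u_{1,1}^{1-m}(r)=\frac{(n-1)(n-2)}{r^{2}}\Bigl(\ln r+\kappa(n)+\frac{n-6}{4(n-2)}\frac{1}{\ln r}+o\bigl(\tfrac{1}{\ln r}\bigr)\Bigr).
\]
This $\kappa$ depends only on $n$ because $u_{1,1}$ is a single fixed function, and the $1/\ln r$ coefficient $\frac{n-6}{4(n-2)}$ is exactly the ratio of the coefficients in Corollary \ref{cor-asymp}.

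The final step is substitution. Writing $B:=(\lambda\beta^{1/(1-m)})^{(1-m)/2}$ and raising the master identity to the power $1-m$ gives $u_{\beta,\lambda}^{1-m}(r)=\lambda^{1-m}\,u_{1,1}^{1-m}(Br)$. Plugging $Br$ into the expansion above, I simplify with two arithmetic facts. First, $B^{2}=(\lambda\beta^{1/(1-m)})^{1-m}=\lambda^{1-m}\beta$, so the prefactor collapses to $\lambda^{1-m}/B^{2}=1/\beta$, reproducing the $\frac{(n-1)(n-2)}{\beta r^{2}}$ in front of the brace in \eqref{eqn-kappa}. Second, $\ln(Br)=\ln r+\ln B$ with $\ln B=\frac{1-m}{2}\bigl(\ln\lambda+\frac{1}{1-m}\ln\beta\bigr)$; since $1-m=\frac{4}{n+2}$ one has $\frac{1-m}{2}=\frac{2}{n+2}$, so $\ln B=\frac{2\ln\lambda}{n+2}+\frac{\ln\beta}{2}$, which is precisely the $\lambda,\beta$-dependent shift added to $\kappa(n)$ in the constant term.

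The only point requiring a moment of care — and the closest thing to an obstacle — is the third-order term: I must verify that the shift $r\mapsto Br$ does not disturb the coefficient of $1/\ln r$ at the claimed order. This is the elementary expansion $\frac{1}{\ln r+\ln B}=\frac{1}{\ln r}\bigl(1+\frac{\ln B}{\ln r}\bigr)^{-1}=\frac{1}{\ln r}+O\bigl(\tfrac{1}{(\ln r)^{2}}\bigr)$, so the coefficient $\frac{n-6}{4(n-2)}$ is scale-invariant and the correction is absorbed into the $o(1/\ln r)$ error. Collecting the three contributions reproduces \eqref{eqn-kappa} verbatim. No genuinely hard analysis is needed here; the substance of the asymptotics lives in Corollary \ref{cor-asymp}, and this proposition merely tracks how the constant term transforms under the soliton's two-parameter scaling group.
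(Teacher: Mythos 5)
Your proposal is correct and follows essentially the same route as the paper: both exploit the two scaling symmetries of \eqref{eq-u2} to obtain the master identity $u_{\beta,\lambda}(x)=\lambda\,u_{1,1}\bigl((\lambda\beta^{1/(1-m)})^{(1-m)/2}x\bigr)$, define $\kappa(n)$ from Corollary \ref{cor-asymp} applied to $u_{1,1}$, and transfer the expansion back by direct substitution. In fact you supply more detail than the paper does for the final ``direct computation'' step, in particular checking explicitly that $\ln B=\frac{2\ln\lambda}{n+2}+\frac{\ln\beta}{2}$ and that the shift $r\mapsto Br$ perturbs the $1/\ln r$ coefficient only at order $o(1/\ln r)$.
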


\section{Long Time Stability}\label{sec-stability}

In the previous section, we found the asymptotic behavior at infinity, up to third order,  of any translating soliton  $u_{\beta,\lambda}$.
 It follows from  \eqref{eqn-kappa},  that the asymptotic behavior  up to second order is  sufficient 
 to distinguish among different steady solitons $u_{\beta,\lambda}$. 
Thus, it  is expected that for  an initial conformally flat metric $g_0 = u_0^{1-m} \, dx^2$ 
with  a behavior 
\begin{equation}\label{eqn-u0}u_0^{1-m}(x)=\fr{1}{|x|^2}\left (A \ln |x| + B + o(1)\right) \quad \text{as} \,\, |x| \to +\infty,
\quad \text{ for some } A>0,\ B\in\R
\end{equation} 
the  Yamabe flow $g(t)=u(\cdot,t)^{1-m} \, \delta_{ij}$ with initial data $g_0$ would converge, as $t \to +\infty$, and after 
rescaling  to the {\em unique} steady soliton $g_{\beta,\lambda}:= u_{\beta, \lambda}^{1-m}\delta_{ij}$   
having the same asymptotics of \eqref{eqn-u0}. 

\smallskip
 
In what follows we will show that this is indeed true. This will be done in two steps:  
In Theorem \ref{thm-1} we will establish  the $L^1_{loc}$ convergence of the flow,
 under the assumption that $u_0\in L^1_{loc}$ and satisfies \eqref{eqn-u0}. In Theorem \ref{thm-2},  we will provide  
an extra condition on $u_0$, namely that $u_0$ belongs to  the local Marcinkiewicz space $M^{(1-m)n/2}_{loc}$,  which guarantees the smooth convergence of the rescaled metric. While smooth globally conformally flat metrics are included in this space, it also allows certain singularities and degeneracies in the metric. In particular, certain cylindrical ends can be added at those singularity points and the flow starting  with this locally conformally flat metric also converges to a steady gradient soliton after those ends pinch  off in a finite time.

%

For a solution $u$ of \eqref{eq-flatyamabe} we consider the rescaled solution 
\begin{equation}\label{eqn-rescaling} 
\bar u(x,t):=e^{\gamma t} u(e^{\beta t}x,t), \qquad \beta >0, \,\, \gamma= \frac{2\beta}{1-m}.
\end{equation}
A direct calculation shows that $\bar u$ satisfies the equation 
\begin{equation}\label{eqn-u3}
\bar u_t = \fr{n-1}{m}\Delta \bar u^{m}  + \beta x \cdot \nabla  \bar u+ \gamma \bar u, \qquad \mbox{on} \,\, \R^n, 
\qquad m=\frac{n-2}{n+2}.
\end{equation} 	

The following result holds.

\begin{theorem}\label{thm-1}
Assume that   $g=u^{1-m} \, \delta_{ij}$ is  a  solution of the Yamabe flow  \eqref{eq-yamabe} with nonnegative initial data   
$u_0\in L^1_{loc}(\R^n)$  which has  the decomposition $u_0=\phi + \psi$ with  $\psi \in L^1(\R^n)$ and  
\begin{equation}\label{eqn-data}
\phi^{1-m} = \fr{1}{r^2} 
 \fr{(n-2)(n-1)}{\beta} \big ( \ln r + K + o(1) \big )\text{ \quad for some $\beta>0$ and $K\in \R$.}
 \end{equation}Then,   the rescaled solution $\bar u(x,t):=e^{\gamma t} u(e^{\beta t}x,t)$ converges, as $t \to +\infty$,  to $u_{\beta,\lambda}$ in $L^1_{loc}(\R^n)$,  for some $\lambda >0$. 
Moreover, the number $\lambda$ is uniquely determined by the coefficient $K$ in the asymptotic behavior of $u_0$, namely 
${\ds K=\fr{2\ln \lambda}{n+2}+\fr{\ln \beta}{2}+ \kappa(n)}$, for some universal constant $\kappa=\kappa(n)$. 
\end{theorem}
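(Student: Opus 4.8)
The plan is to pass to the rescaled variable $\bar u$ of \eqref{eqn-rescaling}, for which \eqref{eqn-u3} holds and each steady soliton $u_{\beta,\lambda}$ is a \emph{stationary} solution, and to show that $\bar u(\cdot,t)$ is squeezed between the two nearby solitons $u_{\beta,\lambda-\delta}$ and $u_{\beta,\lambda+\delta}$ as $t\to+\infty$, for every $\delta>0$. The value of $\lambda$ is prescribed in advance by Proposition \ref{prop-asymp}: formula \eqref{eqn-kappa} shows that the second order coefficient $K_\lambda=\fr{2\ln\lambda}{n+2}+\fr{\ln\beta}{2}+\kappa(n)$ is a strictly increasing bijection of $\lambda\in(0,\infty)$ onto $\R$, so I fix the unique $\lambda$ with $K_\lambda=K$. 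Then $\delta\mapsto K_{\lambda\pm\delta}$ produces solitons whose tails strictly bracket that of $\phi$ in \eqref{eqn-data}, i.e. $u_{\beta,\lambda-\delta}^{1-m}<\phi^{1-m}<u_{\beta,\lambda+\delta}^{1-m}$ on an exterior region $\{|x|\ge R_\delta\}$.

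First I would remove the $L^1$ perturbation $\psi$. By the standard $L^1$-contraction of the fast diffusion semigroup \eqref{eq-flatyamabe}, the solution with data $u_0=\phi+\psi$ and the solution $u^\phi$ with data $\phi$ satisfy $\|u(\cdot,t)-u^\phi(\cdot,t)\|_{L^1(\R^n)}\le\|\psi\|_{L^1(\R^n)}$ for all $t$. Under the rescaling \eqref{eqn-rescaling} this difference is multiplied by $e^{(\gamma-n\beta)t}$ with $\gamma-n\beta=\beta(2-n)/2<0$, so $\|\bar u(\cdot,t)-\bar u^\phi(\cdot,t)\|_{L^1(\R^n)}\to0$. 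Hence it suffices to treat the clean datum $\phi$ of \eqref{eqn-data}, the $\psi$-contribution being negligible even in $L^1(\R^n)$, a fortiori in $L^1_{loc}$.

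Next I would set up compactness. Comparison from above with a very large soliton $u_{\beta,\Lambda}$, which dominates $\phi$ globally at $t=0$ since all solitons share the same leading tail $\fr{(n-1)(n-2)}{\beta}\fr{\ln r}{r^2}$, gives a global upper bound for all $t\ge0$; the strong smoothing and instantaneous positivity of the fast diffusion flow together with comparison from below by a small soliton then provide, for $t\ge1$, a lower bound by a positive constant on every compact set. On compacts these barriers are pinched between positive constants, so \eqref{eqn-u3} is uniformly parabolic there and interior estimates give uniform $C^\infty_{loc}$ bounds. Consequently $\{\bar u(\cdot,t)\}_{t\ge1}$ is precompact in $L^1_{loc}(\R^n)$, and along any $t_j\to\infty$ the time-translates $\bar u(\cdot,t_j+s)$ subconverge to an eternal solution $\bar u_\infty(\cdot,s)$ of \eqref{eqn-u3}.

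The heart of the matter — and the step I expect to be the main obstacle — is to upgrade these crude bounds to the sharp trapping $u_{\beta,\lambda-\delta}\le\bar u(\cdot,t)\le u_{\beta,\lambda+\delta}$ for large $t$. The quantity that must be controlled is precisely the second order tail coefficient $K(t):=\lim_{|x|\to\infty}\big(\tfrac{\beta}{(n-1)(n-2)}|x|^2\,\bar u^{1-m}-\ln|x|\big)$, which by \eqref{eqn-kappa} selects the limiting soliton; the difficulty is that the excess of one soliton over another is \emph{not} integrable — both carry the same power $|x|^{-2/(1-m)}$ at infinity — so the interior discrepancy between $\phi$ and $u_{\beta,\lambda\pm\delta}$ cannot be flushed out by a bare $L^1$ estimate. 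I would instead propagate the strict tail ordering by a maximum principle on the exterior domains $\{|x|\ge R\}$: the strict gap $K_{\lambda\pm\delta}\neq K$ makes $u_{\beta,\lambda\pm\delta}$ ordered super/subsolutions on the parabolic boundary, and, combining the first and second order asymptotics of Proposition \ref{prop-prelim} and Corollary \ref{cor-asymp} with the decay $\gamma-n\beta<0$ of the rescaled mass to absorb the inner boundary flux, one shows the ordering persists on $\{|x|\ge R\}$ for all large $t$. Granting this, every eternal limit $\bar u_\infty$ obeys $u_{\beta,\lambda-\delta}\le\bar u_\infty\le u_{\beta,\lambda+\delta}$; letting $\delta\downarrow0$ forces $\bar u_\infty\equiv u_{\beta,\lambda}$ independently of the sequence, upgrading subsequential to full convergence $\bar u(\cdot,t)\to u_{\beta,\lambda}$ in $L^1_{loc}(\R^n)$. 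The identification $K=\fr{2\ln\lambda}{n+2}+\fr{\ln\beta}{2}+\kappa(n)$ is then read off directly from \eqref{eqn-kappa}.
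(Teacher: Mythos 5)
Your skeleton (fix $\lambda$ from \eqref{eqn-kappa} so that $K_\lambda=K$, strip off $\psi$ via the rescaled $L^1$-contraction, then squeeze $\bar u$ between $u_{\beta,\lambda-\epsilon}$ and $u_{\beta,\lambda+\epsilon}$) matches the paper's, and your removal of $\psi$ is exactly Lemma \ref{lemma-rescaledL1}. But the step you yourself flag as the heart of the matter --- propagating the trapping $u_{\beta,\lambda-\epsilon}\le\bar u(\cdot,t)\le u_{\beta,\lambda+\epsilon}$ to large times --- is not actually proved, and your proposed route is circular: to run a maximum principle on the exterior region $\{|x|\ge R\}\times[t_0,\infty)$ you must control $\bar u$ on the lateral boundary $\{|x|=R\}$ for all large $t$, which is precisely the ordering you are trying to establish; ``absorbing the inner boundary flux'' is not an argument. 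The paper resolves this with a min/max trick that you missed, and which also neutralizes the non-integrability obstruction you correctly identified. Namely, since \eqref{eqn-data} and \eqref{eqn-kappa} give $u_{\beta,\lambda-\epsilon}\le u_0\le u_{\beta,\lambda+\epsilon}$ for $|x|\ge R_\epsilon$, the auxiliary initial data $\min(u_0,u_{\beta,\lambda-\epsilon})$ and $\max(u_0,u_{\beta,\lambda+\epsilon})$ coincide with the respective solitons outside $B_{R_\epsilon}$, hence each differs from \emph{its own} soliton by an $L^1(\R^n)$ function (no comparison between two different solitons is ever needed). Since each soliton is a stationary solution of \eqref{eqn-u3}, Lemma \ref{lemma-rescaledL1} with its decaying factor $e^{(\gamma-n\beta)t}$ forces the rescaled flows $\bar w,\bar h$ of these auxiliary data to converge in $L^1(\R^n)$ to $u_{\beta,\lambda-\epsilon}$ and $u_{\beta,\lambda+\epsilon}$; the global ordering $\bar w\le\bar u\le\bar h$ holds by the comparison principle from $t=0$, and squeezing on compact sets plus $\epsilon\downarrow 0$ finishes the proof --- entirely by $L^1$ estimates, with no exterior barrier argument.

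A second, independent gap is your compactness step: the assertion that some large soliton $u_{\beta,\Lambda}$ dominates $\phi$ globally at $t=0$ is false at this level of generality. Solitons are bounded, while $\phi\in L^1_{loc}$ is only constrained at spatial infinity and may be unbounded on compact sets (e.g.\ a local singularity of order $|x-x_0|^{-2/(1-m)}$ is in $L^1_{loc}$). This is exactly why the paper calls condition \eqref{eqn-toogood} ``too restrictive,'' proves only $L^1_{loc}$ convergence in Theorem \ref{thm-1}, and defers smooth convergence to Theorem \ref{thm-2}, where the Marcinkiewicz hypothesis $u_0\in M^{p^*}_{loc}$ and the delayed-regularity Theorem \ref{thm-V} supply the missing $L^\infty$ bound after a positive time. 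Once the trapping is done correctly as above, no compactness or eternal-limit extraction is needed for the statement at hand.
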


The proof of  Theorem \ref{thm-1} will be based on  the following   $L^1$-contraction property between two rescaled solutions 
$\bar u_1$ and $\bar u_2$ of equation \eqref{eq-flatyamabe}. 

\begin{lemma}\label{lemma-rescaledL1} If $u_1$ and $u_2$ are solutions of equation \eqref{eq-flatyamabe} and $\bar u_1$ and $\bar u_2$ are the rescaled solutions, respectively, then 
\begin{equation}\label{eqn-contrac1} 
\int_{\R^n} |\bar u_1(x,t) - \bar u_2(x,t) | \, dx \le e^{(\gamma-n\beta)t} \int_{\R^n} |\bar u_1(x,0) - \bar u_2(x,0) | \, dx. 
\end{equation} Note that  $ \gamma-n\beta = (\fr{2}{1-m}-n )\, \beta = \fr{2-n}{2}\beta <0$,   for $n\ge3$. 
\end{lemma}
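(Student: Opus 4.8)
The plan is to deduce \eqref{eqn-contrac1} from the classical $L^1$-contraction of the unrescaled fast-diffusion equation \eqref{eq-flatyamabe}, arranging matters so that the exponential factor $e^{(\gamma-n\beta)t}$ appears \emph{only} as the Jacobian of the parabolic rescaling, while the contraction itself contributes no growth. First I would record the scaling identity. Writing the rescaling \eqref{eqn-rescaling} as $\bar u_i(x,t)=e^{\gamma t}u_i(e^{\beta t}x,t)$ and substituting $y=e^{\beta t}x$, $dx=e^{-n\beta t}\,dy$, one obtains for each fixed $t$
\[
\int_{\R^n}|\bar u_1(x,t)-\bar u_2(x,t)|\,dx=e^{(\gamma-n\beta)t}\int_{\R^n}|u_1(y,t)-u_2(y,t)|\,dy .
\]
The same identity at $t=0$ reads $\int_{\R^n}|\bar u_1(x,0)-\bar u_2(x,0)|\,dx=\int_{\R^n}|u_1(y,0)-u_2(y,0)|\,dy$, since both rescalings reduce to the identity map at $t=0$.

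Second, I would invoke the $L^1$-contraction for \eqref{eq-flatyamabe}: if $u_1,u_2$ solve $u_t=\fr{n-1}{m}\Delta u^m$ with $u_1(\cdot,0)-u_2(\cdot,0)\in L^1(\R^n)$, then $t\mapsto\int_{\R^n}|u_1(y,t)-u_2(y,t)|\,dy$ is non-increasing. The proof is standard: setting $w=u_1-u_2$ and $\Phi(u)=\fr{n-1}{m}u^m$ (monotone increasing, so $\operatorname{sgn}w=\operatorname{sgn}(\Phi(u_1)-\Phi(u_2))$), one multiplies $w_t=\Delta(\Phi(u_1)-\Phi(u_2))$ by a smooth approximation of $\operatorname{sgn}(w)$, integrates, and uses Kato's inequality $\operatorname{sgn}(v)\,\Delta v\le\Delta|v|$ together with $\int_{\R^n}\Delta|v|=0$ to get $\fr{d}{dt}\int|w|\le0$. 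Combining this monotonicity with the scaling identity gives, for every $t\ge0$,
\[
\int_{\R^n}|\bar u_1(t)-\bar u_2(t)|\,dx=e^{(\gamma-n\beta)t}\int_{\R^n}|u_1(t)-u_2(t)|\,dy\le e^{(\gamma-n\beta)t}\int_{\R^n}|\bar u_1(0)-\bar u_2(0)|\,dx,
\]
which is \eqref{eqn-contrac1}.

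The main obstacle is the rigorous justification of the contraction for the solution class at hand. The individual solutions $u_i$ are \emph{not} themselves integrable, since they carry the slowly decaying cylindrical tail of \eqref{eqn-data}; the argument must therefore be run on the difference $w=u_1-u_2$, which is only assumed to lie in $L^1$ at $t=0$. One must control the flux $\nabla(\Phi(u_1)-\Phi(u_2))$ near spatial infinity so that no boundary term survives in the integration by parts, and verify that $w(\cdot,t)$ remains in $L^1$ for $t>0$. This is handled by the usual approximation scheme for the fast-diffusion equation (truncating $\operatorname{sgn}$, integrating on balls $B_R$ with the outward flux estimated, and letting $R\to\infty$), and is precisely where the regularity and decay estimates for the flow are needed.

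Alternatively, I could bypass the change of variables and argue directly on the rescaled equation \eqref{eqn-u3}. Multiplying $\bar w_t=\fr{n-1}{m}\Delta(\bar u_1^m-\bar u_2^m)+\beta\,x\cdot\nabla\bar w+\gamma\bar w$ by $\operatorname{sgn}(\bar w)$ and integrating, the diffusion term is $\le0$ by Kato, the drift term gives $\beta\int x\cdot\nabla|\bar w|\,dx=-n\beta\int|\bar w|\,dx$ after an integration by parts (using $\Div(x|\bar w|)=n|\bar w|+x\cdot\nabla|\bar w|$), and the zeroth-order term gives $\gamma\int|\bar w|\,dx$. Hence $\fr{d}{dt}\int|\bar w|\le(\gamma-n\beta)\int|\bar w|$, and Gronwall's inequality yields the same bound. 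This route makes the exponent transparent but requires the extra integration by parts $\int x\cdot\nabla|\bar w|\,dx$, which carries the same decay-at-infinity caveat noted above.
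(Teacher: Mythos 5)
Your proposal is correct and follows essentially the same route as the paper: the paper likewise invokes the classical $L^1$-contraction for the unrescaled fast-diffusion equation \eqref{eq-flatyamabe} and obtains the factor $e^{(\gamma-n\beta)t}$ from the Jacobian of the change of variables $y=e^{\beta t}x$, which is exactly your scaling identity. Your explicit write-up of that "direct calculation," the sketch of the contraction principle via Kato's inequality, and the remarks on decay at infinity simply fill in details the paper leaves to the reader.
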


\begin{proof} It is well known that any two solutions $u_1$ and $u_2$ of \eqref{eq-flatyamabe} satisfy the contraction principle
$$ 
\int_{\R^n} |u_1(x,t) -  u_2(x,t) | \, dx \le \int_{\R^n} | u_1(x,0) -u_2(x,0) | \, dx. 
$$
Hence, \eqref{eqn-contrac1} follows by direct calculation. 
\end{proof}

\begin{proof}[Proof of Theorem \ref{thm-1}]
By Lemma \ref{lemma-rescaledL1}, it suffices to prove the result when $u_0=\phi$. Consider the self-similar solution 
$u_{\beta,\lambda}$ satisfying the asymptotic behavior \eqref{eqn-kappa} with $\lambda$ determined by ${\ds K=\fr{2\ln \lambda}{n+2}+\fr{\ln \beta}{2}+ \kappa(n)}$.  It follows from  \eqref{eqn-kappa} and the  given asymptotics of initial data \eqref{eqn-data},
that for each $\epsilon>0$, there exists  $R_\epsilon >>1$ such that 
$$u_{\beta,\lambda-\epsilon}\le u_0 \le u_{\beta,\lambda+\epsilon}, \qquad \mbox{for} \,\, |x| \geq R_\epsilon.$$ 
Hence, we have  
$$\min (u_0, u_{\beta,\lambda-\epsilon}) -u_{\beta,\lambda-\epsilon} \in L^1(\R^n) \qquad \mbox{and} \qquad 
\max (u_0, u_{\beta,\lambda +\epsilon}) -u_{\beta,\lambda +\epsilon} \in L^1(\R^n).$$ 
Let $w, h$ denote the   solutions to equation \eqref{eq-flatyamabe} with initial data 
$\min (u_0, u_{\beta,\lambda-\epsilon}), \max (u_0, u_{\beta,\lambda+\epsilon}) $ respectively,
and denote by $\bar w, \bar v$ the rescaled solutions defined by  \eqref{eqn-rescaling}. 
The comparison principle then implies the inequality  
$$\bar w\le \bar u \le \bar h, \qquad \mbox{for}\,\, t>0$$ and by  Lemma \ref{lemma-rescaledL1}. 
we have 
$$\bar w \to u_{\beta,\lambda-\epsilon} \quad \mbox{and} \quad \bar h\to u_{\beta,\lambda+\epsilon}  \quad \mbox{in} \,\, L^1(\R^n),
\quad \mbox{as}\,\, t\to \infty.$$
For any compact set $K\subset \R^n$, we have $$\int_K | \bar u-u_{\beta,\lambda}|_+ \le \int_K |\bar h-u_{\beta,\lambda}|_+ \le \int_K |\bar h-u_{\beta,\lambda+\epsilon}|_+  + \int_K |\bar u_{\beta,\lambda+\epsilon}-u_{\beta,\lambda}|_+ .$$ Doing  the same computation for $\int_K |u-u_{\beta,\lambda}|_-$ 
and taking  $\limsup_{t\to\infty}$ yields 
$$\limsup_{t\to\infty} \int_K |u-u_{\beta,\lambda}| \le  \int_K |u_{\beta,\lambda+\epsilon}-u_{\beta,\lambda}|_+ +  \int_K |u_{\beta,\lambda-\epsilon}-u_{\beta,\lambda}|_- .$$
Taking $\epsilon \to 0$, the right hand side of above equality converges to $0$ and this finishes the proof.
\end{proof}

\smallskip

By the Arzela-Ascoli theorem, the $L^1_{loc}$ convergence in the previous result can be directly improved to $C_{loc}$ convergence,
when $\bar u(t)$ is locally equicontinuous for large $t$. It is well known, that for solutions $\bar u$  of  \eqref{eqn-u3}, an 
$L^\infty_{loc}$ bound implies equicontinuity (see  in Section 1.5 in \cite{DK}).  Thus, if we knew for instance that 
 \begin{equation}\label{eqn-toogood}
u_0  \leq u_{\beta,\lambda_0}, \qquad \mbox{for some} \,\, \lambda_0 >0
 \end{equation}
then we would know that $\bar u(t) \leq u_{\beta,\lambda_0}$ for all $t >0$ and 
as a consequence  $\bar u(t)$ would converge to  $u_{\beta,\lambda}$,
 as $t \to +\infty$, in $C_{loc}$.  Then,  standard regularity theory for uniformly parabolic equations would imply  $C_{loc}^\infty$ convergence. Condition \eqref{eqn-toogood} certainly  holds if $u_0 \in L^\infty$. Thus the following follows from our discussion above.

\begin{cor}\label{cor-uniform}
If $u_0\in L^\infty$ and $u_0^{1-m} = \fr{1}{r^2} \left(  \fr{(n-2)(n-1)}{\beta} \ln r + C+ o(1) \right)$, there is  some $\lambda_0>0$ such that $u_0 \le u_{\beta,\lambda_0}$.  
\begin{proof} For fixed $R>0$, since $u_{\beta,\lambda}$ is decreasing in $|x|$,  
$$\inf_{|x|\le R} u_{\beta,\lambda}^{1-m}(|x|)=u_{\beta,\lambda}^{1-m}(R)=\lambda^{1-m} u_{\beta,1}^{1-m}(\lambda^{\fr{1-m}{2}}R)=\fr{(n-2)(n-1)}{R^2\beta}\ln \lambda^{\fr{1-m}{2}}+ O(1) \to \infty$$ as $\lambda\to\infty$. 
i.e. $u_{\beta,\lambda}$ blow up on every compact sets as $\lambda \to \infty$. $L^\infty$ bound and decay asymptotics of initial data $u_0$ imply existence of a large $\lambda_0$ with $u_0 \le u_{\beta,\lambda_0}$.
\end{proof}
\end{cor}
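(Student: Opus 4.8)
The plan is to produce a single soliton $u_{\beta,\lambda_0}$ dominating $u_0$ by splitting $\R^n$ into a large closed ball $\overline{B_R}$ and its exterior, exploiting two monotonicity features of the family $(u_{\beta,\lambda})_{\lambda>0}$: at infinity the sub-leading constant $K_\lambda=\fr{2\ln\lambda}{n+2}+\fr{\ln\beta}{2}+\kappa(n)$ appearing in the expansion \eqref{eqn-kappa} is strictly increasing in $\lambda$, while on any fixed compact set the soliton blows up as $\lambda\to\infty$. Since $u\mapsto u^{1-m}$ is increasing, the desired bound $u_0\le u_{\beta,\lambda_0}$ is equivalent to $u_0^{1-m}\le u_{\beta,\lambda_0}^{1-m}$, and I will verify the latter on the two regions separately and then patch.

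For the exterior region I would compare the two expansions directly. Factoring the common prefactor $\fr{(n-2)(n-1)}{\beta r^2}$ out of both the hypothesis on $u_0$ and the expansion \eqref{eqn-kappa}, the leading $\ln r$ terms agree exactly, while the additive constants are $C$ for $u_0$ and the fixed multiple $\fr{(n-2)(n-1)}{\beta}K_\lambda$ for the soliton. Choosing $\lambda$ large enough that this multiple of $K_\lambda$ strictly exceeds $C$ opens a positive gap at the level of additive constants, which eventually dominates the $o(1)$ errors; hence $u_0^{1-m}\le u_{\beta,\lambda}^{1-m}$ holds on some exterior $\{|x|\ge R\}$.

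For the compact core I would use that each $u_{\beta,\lambda}$ is radial and decreasing, so $\inf_{\overline{B_R}} u_{\beta,\lambda}^{1-m}=u_{\beta,\lambda}^{1-m}(R)$, and then evaluate \eqref{eqn-kappa} (equivalently the scaling \eqref{eq-scaling}) at $r=R$ to get $u_{\beta,\lambda}^{1-m}(R)=\fr{(n-2)(n-1)}{R^2\beta}\ln\lambda^{(1-m)/2}+O(1)\to+\infty$ as $\lambda\to\infty$. Since $u_0\in L^\infty$ forces $u_0^{1-m}\le\|u_0\|_\infty^{1-m}$ on $\overline{B_R}$, taking $\lambda$ sufficiently large yields $u_0^{1-m}\le u_{\beta,\lambda}^{1-m}$ throughout $\overline{B_R}$.

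The point requiring care — and the step I expect to be the real obstacle — is to select one $\lambda_0$ that serves both regions, since a priori the exterior radius $R$ could depend on $\lambda$. The clean resolution is to fix $R$ once using a value $\lambda_1$ for which $K_{\lambda_1}$ beats $C$ with room to spare, and to note that raising $\lambda$ beyond $\lambda_1$ only improves the exterior comparison: by \eqref{eq-scaling} one has $u_{\beta,\lambda}(x)=\lambda\,u_{1,1}\big((\lambda\beta^{1/(1-m)})^{(1-m)/2}x\big)$, so increasing $\lambda$ both enlarges the constant gap $K_\lambda-\tfrac{\beta C}{(n-2)(n-1)}\ge K_{\lambda_1}-\tfrac{\beta C}{(n-2)(n-1)}>0$ and pushes each fixed $x$ further into the far field of $u_{1,1}$, where the error in \eqref{eqn-kappa} is of size $O(1/\ln|y|)$ with $|y|\gtrsim\lambda^{(1-m)/2}|x|$ and hence uniformly small for $\lambda\ge\lambda_1$ and $|x|\ge R$. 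Thus the exterior bound persists on the same $\{|x|\ge R\}$ for every $\lambda\ge\lambda_1$. Fixing this $R$, I would then apply the compact-core step to choose $\lambda_2\ge\lambda_1$ handling $\overline{B_R}$ and set $\lambda_0:=\lambda_2$; the exterior bound holds for $\lambda_0$ by the monotonicity just described, and the two estimates combine to give $u_0\le u_{\beta,\lambda_0}$ on all of $\R^n$.
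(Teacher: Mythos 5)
Your proof is correct and follows essentially the same route as the paper's: the compact-core step (radial monotonicity plus the scaling identity \eqref{eq-scaling}, giving blow-up of $u_{\beta,\lambda}$ on compact sets as $\lambda\to\infty$) is identical, and your exterior comparison via the expansion \eqref{eqn-kappa} is exactly what the paper compresses into the phrase ``$L^\infty$ bound and decay asymptotics of initial data $u_0$ imply existence of a large $\lambda_0$''. Your explicit treatment of the uniformity of the exterior radius in $\lambda$ (via rescaling to the far field of $u_{1,1}$) fills in a detail the paper leaves implicit, but it is the same argument, not a different approach.
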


Condition \eqref{eqn-toogood} is too restrictive and in particular does not allow any singularities or degeneracies 
in our initial metric.  The  object in the rest of this section is to give a condition on initial data which would guarantee  that  for some $t_0$ large we have 
 \begin{equation}\label{eq-t0}
 \bar u(\cdot, t_0) \leq u_{\beta,\lambda_0}, \qquad \mbox{for some} \,\, \lambda_0 >0 \quad \mbox{and} \quad t_0 >>1
 \end{equation}
and hence imply smooth convergence on compact sets. 

%
%

Next, we will show that  \eqref{eq-t0} holds for a  certain class of  locally conformally flat and possibly singular initial data.   The extra condition we will  assume is that $u_0$ belongs to the  Marcinkiewicz space  $ M^{p^*}_{loc}$, with $p^*=(1-m)\fr{n}{2}=\fr{4n}{2(n+2)}$.

To establish  that  \eqref{eq-t0} holds, we need an estimate which shows that a solution with non smooth, singular initial data becomes bounded and smooth. Such  smoothing estimates of the fast diffusion equation are well studied. 
If $u_0\in L^q_{loc}$,  with $q>(1-m)\fr{n}{2}=p^*$, then we have that  $u_0\in L^q $ due to our asymptotics and the
De Giorgi-Nash-Moser technique argument gives an $L^\infty$ estimate of $u$ for $t>0$.  In this critical exponent $q=p*$, however, this technique doesn't work and there is a surprising effect, the  so called delayed regularity phenomenon, which says that if $u_0$
belongs to the  Marcinkiewicz space  $ M^{p^*}_{loc}$, with $p^*=(1-m)\fr{n}{2}=\fr{4n}{2(n+2)}$, then  $u(t)$ eventually becomes  in $L^\infty$ for some $t_0 >0$ but it takes some time to get there.  For the convenience of the reader, we next define 
the space   $M^{p^*}_{loc}$ referring to  Chapter 1 and 6 of \cite{V} for further related preliminaries and details. 
		
		\begin{definition}[Marcinkiewicz Space] For an open set $\Omega\subset \R^n$\begin{equation}M^p(\Omega):= \large \{f \in L^1_{loc}(\Omega) | \,\, \exists C \text{ s.t. } \int_K|f| \, dx\le C \, |K|^{(p-1)/p}\text{ for all }|K|<\infty  \large \}\end{equation}		\begin{equation} ||f||_{M^p(\Omega)}= \sup \, \large \{|K|^{-(1-p)/p}\int_K |f| \, dx : K\subset \Omega, 0<|K|<\infty \large \}\end{equation} 
		\begin{equation}M^p_{loc}(\R^n) := \large \{f \in L^1_{loc}(\R^n) | f\in M^p(\Omega)\text{ for every bounded open set }\Omega
		\large \} \end{equation}\end{definition}

The following fundamental result was shown.
			
\begin{thm}[\cite{V} Theorem 6.1] \label{thm-V}Let $u_0\ge0$ to be in the space $M^{p^*}+L^\infty$. Then there is a time $T>0$ after which the solution $u$ of \eqref{eq-u} becomes  bounded and continuous. 
More precisely, there is a constant $c=c(n)$ such that \begin{equation}T<c\, N^{1-m},
		\end{equation}
where $N=N_{p^*}(u_0):=\lim_{A\to\infty} ||(|f|-A)_+||_{M^{p^*}}$.
	\end{thm}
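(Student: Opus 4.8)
The plan is to prove an $L^\infty$ smoothing estimate for the fast diffusion equation \eqref{eq-flatyamabe} directly, paying special attention to the critical exponent $p^\ast = n(1-m)/2$, at which the usual \emph{instantaneous} smoothing breaks down and a positive waiting time becomes unavoidable. First I would recall the \emph{supercritical} smoothing estimate: for any $p > p^\ast$, a Moser iteration on $u_t = \frac{n-1}{m}\Delta u^m$ yields
$$\|u(t)\|_{L^\infty} \le C(n,m,p)\, t^{-\frac{n}{2p-n(1-m)}}\,\|u_0\|_{L^p}^{\frac{2p}{2p-n(1-m)}},$$
where the exponents are forced by the scaling $u(x,t) \mapsto k\,u(\lambda x, k^{m-1}\lambda^2 t)$ (the fixed constant $\frac{n-1}{m}$ does not affect this structure). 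As $p \downarrow p^\ast$ the time-exponent $n/(2p-n(1-m))$ diverges, which is precisely the analytic signature of the critical case and the reason a naive iteration cannot produce any estimate at $p=p^\ast$.

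To reach the critical level I would replace the strong $L^{p^\ast}$ norm by the weak (Marcinkiewicz) quantity and run the iteration in weak-$L^p$ spaces. The key step is a \emph{small-norm lemma}: there is a threshold $\varepsilon_0 = \varepsilon_0(n)>0$ such that if $0\le v_0\in M^{p^\ast}$ with $\|v_0\|_{M^{p^\ast}}\le \varepsilon_0$, then the solution with data $v_0$ becomes bounded after a fixed unit time, with a quantitative $L^\infty$ bound; the smallness is what compensates for the vanishing gain per Moser step at the critical exponent. Rescaling this lemma by the symmetry $v_0\mapsto k\,v_0(\lambda\,\cdot)$, under which the Marcinkiewicz norm transforms as $k\lambda^{-n/p^\ast}=k\lambda^{-2/(1-m)}$ and the solution as $v(x,t)\mapsto k\,v(\lambda x, k^{m-1}\lambda^2 t)$, turns the fixed threshold into a waiting time: a short computation in which the free parameter $\lambda$ cancels shows that a datum of Marcinkiewicz norm $M$ becomes bounded after time $c\,M^{1-m}$. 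This is the source of the factor $N^{1-m}$ and of the dimensional balance $[T]=[\,\|\cdot\|_{M^{p^\ast}}\,]^{1-m}$.

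It remains to pass from a small-norm datum to a general $u_0\in M^{p^\ast}+L^\infty$, and this is where the truncation and the precise definition of $N$ enter. For a threshold $A$ I would split $u_0 = \min(u_0,A) + (u_0-A)_+$; the first piece is bounded, so by comparison with the constant (stationary) solution $\equiv A$ it contributes nothing to the loss of regularity and imposes no delay, while the second piece has Marcinkiewicz norm tending to $N$ as $A\to\infty$. Since $N = \lim_{A\to\infty}\|(u_0-A)_+\|_{M^{p^\ast}}$, choosing $A$ large makes the tail norm as close to $N$ as desired; applying the rescaled small-norm lemma to the tail and coupling it with the bounded part through the order-preserving property of the flow then yields boundedness and continuity of $u$ for $t>T$ with $T<c(n)\,N^{1-m}$.

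The main obstacle is precisely the critical exponent: at $p=p^\ast$ the scale-invariance of the $M^{p^\ast}$ norm forbids any instantaneous $L^\infty$ bound, so the whole difficulty is to extract a usable smoothing from the \emph{weak} norm and to confine the obstruction to a finite time interval whose length is governed by the tail quantity $N$ alone. A secondary technical point is that, because the equation is nonlinear and superposition fails, the decomposition of $u_0$ into a bounded part and a small-tail part cannot be recombined by linearity but only through comparison (using the stability of constants as solutions and the ordering of the flow); obtaining the clean constant $c=c(n)$, in particular showing that the $L^\infty$ part of the data affects only the final constant and not the onset time $T$, requires keeping the truncation tightly coupled to the weak-type iteration.
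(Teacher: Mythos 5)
First, a point of order: the paper does not prove this statement at all. Theorem \ref{thm-V} is imported as a black box from V\'azquez's book (\cite{V}, Theorem 6.1) --- the ``delayed regularity'' theorem for critical fast diffusion --- and the authors only ever use its conclusion (in the proof of Theorem \ref{thm-2}). So there is no in-paper proof to compare against, and your sketch has to be judged as a standalone proof of V\'azquez's theorem.

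Judged that way, it has the right skeleton but two genuine gaps. The bookkeeping you do carefully is correct: the supercritical smoothing exponents are the ones forced by the symmetry $u\mapsto k\,u(\lambda x,k^{m-1}\lambda^{2}t)$, and the computation converting a small-norm/unit-time lemma into a waiting time $c\,M^{1-m}$ (with $\lambda$ cancelling) is right. The first gap is that the ``small-norm lemma'' \emph{is} the theorem, and you give no mechanism for it: you say smallness ``compensates for the vanishing gain per Moser step,'' but observe that the $M^{p^{*}}$ norm is invariant under exactly those scalings that fix the time variable (take $k=\lambda^{2/(1-m)}$, so that $k^{m-1}\lambda^{2}=1$), so smallness of this scale-invariant quantity cannot be leveraged by any scaling or straightforward iteration argument --- extracting a finite waiting time from the weak norm is the hard critical-space analysis, and in \cite{V} it is organized around symmetrization (concentration comparison) and comparison with the explicit cylindrical solution $u^{1-m}=(n-1)(n-2)(T-t)_{+}\,|x|^{-2}$, the extremal that the remark following Theorem \ref{thm-V} singles out: its data has $M^{p^{*}}$ norm proportional to $T^{1/(1-m)}$ and it regularizes exactly at $t=T$, which is where the exponent $1-m$ and the sharpness of the bound come from. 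Any proof must engage with this extremal; yours never does. The second gap is the recombination step: for the fast diffusion equation it is \emph{not} true that $u_{0}\le A+w_{0}$ implies $u(\cdot,t)\le A+w(\cdot,t)$, because $A+w$ fails to be a supersolution when $w$ is a solution --- the required inequality $\Delta\bigl[(A+w)^{m}-w^{m}\bigr]\le 0$ has no sign in general and fails, for instance, at spatial maxima of $w$. So ``coupling the bounded part with the tail through the order-preserving property'' is not an available operation; one needs either the concentration-comparison machinery or an honest supersolution construction adapted to data of the form $A+CN|x|^{-2/(1-m)}$, and this is precisely where the truncation-based definition of $N$ does real work in \cite{V}. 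A minor further point: boundedness does not give continuity for free; that requires the local regularity theory for fast diffusion (e.g.\ \cite{DK}) once the $L^{\infty}$ bound is in hand.
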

	\begin{remark}
		It is known that $L^p\subset M^p$ and $N=N_{p^*}(u_0)= 0$ if $u_0\in L^{p^*}+L^{\infty}$  hence in this case $L^\infty$ bound is immediate for $t>0$.   Next, $M^{p^*}+L^\infty\subset M^{p^*}_{loc}$, but they are the same under the decay condition of $u_0$ we assumed. Finally, a typical function $f \in M^{p^*}$, but not in $L^{p^*}$ is $f(x)=\left(\fr{1}{|x|}\right)^{\fr{2}{1-m}}$.  In terms of metric this corresponds to a cylindrical end and the delayed regularity result describes a situation this cylinder shrinks and becomes extinct in a finite time. 
	\end{remark}

We will prove the following result.

	\begin{thm}\label{thm-2}
Assume that   $g=u^{1-m} \, \delta_{ij}$ is  a  solution of the Yamabe flow  \eqref{eq-yamabe} with nonnegative initial data   
$u_0\in  M^{p^*}_{loc}(\R^n)\subset L^1_{loc}(\R^n)$, $p^*=(1-m)\fr{n}{2}$,  such that 
\begin{equation}\label{eqn-initialupperdecay}\limsup_{|x|\to \infty}\left[|x|^2\, u_0^{1-m} -   \fr{(n-2)(n-1)}{\beta} \ln |x|\right]<\infty \end{equation}
for some $\beta >0$. 
Assume in addition that  $u_0$ has a decomposition $u_0=\phi + \psi$ with $\psi \in L^1(\R^n)$ and 
$\phi$ satisfying \eqref{eqn-data}. Then, the rescaled solution $\bar u(x,t) := e^{\fr{2\beta}{1-m} t} u (e^{\beta t}x,t) $ converges 
as $t \to +\infty$,   smoothly on compact sets of $\R^n$, to  $u_{\lambda,\beta}$  which is the unique radial entire solution of 
\eqref{eq-u2} satisfying \eqref{eqn-data}.
\end{thm}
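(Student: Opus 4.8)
The plan is to combine Theorem \ref{thm-1} (which already gives $L^1_{loc}$ convergence of the rescaled flow $\bar u(\cdot,t) \to u_{\beta,\lambda}$) with the delayed-regularity result Theorem \ref{thm-V}, so that the convergence can be upgraded to $C^\infty_{loc}$. The mechanism for the upgrade has already been explained in the discussion preceding Corollary \ref{cor-uniform}: it suffices to establish the uniform upper bound \eqref{eq-t0}, namely that $\bar u(\cdot,t_0) \le u_{\beta,\lambda_0}$ for some $\lambda_0 > 0$ and some large $t_0$. Once \eqref{eq-t0} holds, the comparison principle propagates it to all later times, giving a local $L^\infty$ bound on $\bar u$; by the regularity theory for \eqref{eqn-u3} cited from \cite{DK}, an $L^\infty_{loc}$ bound yields local equicontinuity, so Arzel\`a--Ascoli plus the already-known $L^1_{loc}$ convergence promotes the limit to $C_{loc}$, and finally the standard interior Schauder/parabolic estimates for the now uniformly parabolic equation give $C^\infty_{loc}$ convergence.

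So the real content is proving \eqref{eq-t0}, and the strategy here is to split the behavior near infinity from the behavior on a large compact ball. First I would use the upper-decay hypothesis \eqref{eqn-initialupperdecay}: this says that outside a large ball $B_{R}$ the initial datum $u_0$ is dominated by $u_{\beta,\lambda_1}$ for some $\lambda_1$ large enough (just as in the proof of Corollary \ref{cor-uniform}, where enlarging $\lambda$ lifts the soliton's value on every fixed compact set to infinity). The difficulty is the interior ball $B_R$, where $u_0$ is only assumed to lie in the Marcinkiewicz space $M^{p^*}_{loc}$ and may be genuinely unbounded or singular. This is exactly the regime the delayed-regularity Theorem \ref{thm-V} is designed for: since $M^{p^*}+L^\infty$ agrees with $M^{p^*}_{loc}$ under the decay condition we have imposed (see the remark after Theorem \ref{thm-V}), $u_0 \in M^{p^*}+L^\infty$, so there is a finite time $T>0$ after which the (unrescaled) solution $u(\cdot,t)$ is bounded and continuous on compact sets. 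After that time the singularity has been smoothed away.

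The main obstacle, and the step requiring care, is reconciling the two time-scales: Theorem \ref{thm-V} gives boundedness of the unrescaled $u$ for $t > T$, whereas I need a bound on the rescaled $\bar u(x,t) = e^{\gamma t} u(e^{\beta t}x,t)$ at a single large time $t_0$, and then a bound that is compatible with being dominated by a fixed soliton $u_{\beta,\lambda_0}$. The clean way to do this is: pick $t_0 > T$; on a fixed compact set $K$, the smoothing estimate gives $\bar u(\cdot,t_0) \le C(K,t_0) < \infty$, while outside a large ball the comparison-with-$u_{\beta,\lambda_1}$ bound survives rescaling because the family $u_{\beta,\lambda}$ is invariant under the rescaling \eqref{eqn-rescaling} (it is stationary for \eqref{eqn-u3}); then invoke the blow-up-on-compacta property of $u_{\beta,\lambda}$ as $\lambda\to\infty$ (exactly the computation in Corollary \ref{cor-uniform}) to choose $\lambda_0$ so large that $u_{\beta,\lambda_0}$ exceeds both the interior constant bound on $K$ and $u_{\beta,\lambda_1}$ outside. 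This yields \eqref{eq-t0}, and the smooth-convergence machinery then finishes the proof, with the identification of $\lambda$ via $K = \tfrac{2\ln\lambda}{n+2} + \tfrac{\ln\beta}{2} + \kappa(n)$ carried over verbatim from Theorem \ref{thm-1}.
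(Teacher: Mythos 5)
Your overall skeleton matches the paper's: Theorem \ref{thm-1} for the $L^1_{loc}$ convergence, Theorem \ref{thm-V} for delayed regularity, then the bound \eqref{eq-t0} at some large time, then comparison plus parabolic regularity. But there is a genuine gap at exactly the step you flag as the crucial one: you claim the exterior bound $u_0 \le u_{\beta,\lambda_1}$ on $\{|x|>R\}$ ``survives'' to time $t_0$ because the solitons are stationary solutions of the rescaled equation \eqref{eqn-u3}. Stationarity is not the issue; the issue is that no comparison principle is available to propagate this bound. You cannot compare on all of $\R^n$, since inside $B_R$ the initial datum is only in $M^{p^*}_{loc}$ and need not lie below any soliton (it may be unbounded there, e.g.\ a cylindrical end). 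And you cannot compare on the exterior domain $\{|x|>R\}\times(0,t_0]$ either, because comparison there requires control of $u$ on the lateral boundary $\{|x|=R\}\times(0,t_0]$, which you do not have: before the delayed-regularity time the interior singularity can a priori spread outward and lift the solution above $u_{\beta,\lambda_1}$ in the exterior region. Consequently, at time $t_0$ all you actually know is the global constant bound from Theorem \ref{thm-V}, and a constant bound alone does not place $\bar u(\cdot,t_0)$ below any $u_{\beta,\lambda_0}$ near infinity, since the solitons decay like $(\ln r/r^2)^{\fr{1}{1-m}}$. So \eqref{eq-t0} does not follow from what you have established.

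This missing step is precisely what the paper spends most of its proof on. Proposition \ref{prop-barrier} constructs a supersolution $\bar v = \left(\fr{r^2}{r^2-h^2}\right)^{\fr{1}{1-m}} u_{\beta,\lambda}$ of the rescaled equation on $\{|x|>h\}$ which blows up as $|x|\to h^+$; because of this blow-up, comparison on the exterior region is valid no matter what the solution does on $\{|x|=h\}$. Proposition \ref{prop-comp} then shows that the upper asymptotics \eqref{eqn-initialupperdecay} are preserved (in fact improve linearly in $t$) along the flow. Only then, at the delayed-regularity time $T$, does the paper combine the global $L^\infty$ bound with the preserved decay, as in Corollary \ref{cor-uniform}, to produce $\lambda_2$ with $\bar u(\cdot,T)\le u_{\beta,\lambda_2}$; after that, your concluding steps (forward comparison, equicontinuity from \cite{DK}, interior parabolic estimates, and the identification of $\lambda$ from Theorem \ref{thm-1}) go through as you describe. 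To repair your proposal you would need to supply this barrier construction, or some other mechanism for exterior comparison that tolerates uncontrolled interior data.
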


The crucial  step in the proof of  Theorem \ref{thm-2} is to show that   the  upper bound \eqref{eq-t0} holds for some time $t_0$ after delayed regularity. For this,   we will need to prove   that 
the  asymptotics \eqref{eqn-initialupperdecay} of our initial data will  not deteriorate but evolve according to the Yamabe flow.
We achieve this  by  constructing barriers outside of compact balls. 
We will  use the notation $f \sim g \text{ as }r\to \infty$ to indicate that   $\lim_{r\to\infty} f/g =1$.

\begin{prop}[Barrier construction]\label{prop-barrier}  There is $R=R(n,\beta,\lambda)>0$ such that for any $h>R$, the functions 
$$\bar v :=\left(\fr{r^2}{r^2-h^2}\right)^{\fr{1}{1-m}} u_{\beta,\lambda} \qquad \mbox{and} \qquad \underline v := \left(\fr{r^2-h^2}{r^2}\right)^{\fr{1}{1-m}}u_{\beta,\lambda} $$ are a  supersolution and subsolution , respectively,  of the equation \begin{equation}\label{eq-u_t}
u_t=\fr{n-1}{m}\Delta u^m  + \beta x \cdot \nabla  u+ \gamma u, \qquad \text{ on } \big \{ |x|>h \big \} \times (-\infty,\infty). 
\end{equation}	

\end{prop}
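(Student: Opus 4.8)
Since $\bar v$ and $\underline v$ do not depend on $t$, proving that they are a super- and a subsolution of \eqref{eq-u_t} is the same as proving the two \emph{stationary} inequalities $\mathcal N[\bar v]\le 0$ and $\mathcal N[\underline v]\ge 0$ on $\{|x|>h\}$, where $\mathcal N[u]:=\frac{n-1}{m}\Delta u^m+\beta x\cdot\nabla u+\gamma u$ is the elliptic operator annihilating $u_{\beta,\lambda}$. Both candidates are radial, so I would work in the cylindrical variable $s=\log r$ and with the quantity $w_v:=r^2v^{1-m}$ of Section~\ref{sec-lowerorder}, in terms of which $u_{\beta,\lambda}$ is encoded by the solution $W$ of \eqref{eq-w}. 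The first step is the pointwise identity
\[
\mathcal N[v]=\frac{n-1}{(1-m)\,w_v^{2}}\,v\,B(w_v),\qquad B(w):=w_{ss}-\frac{6-n}{4}\frac{w_s^{2}}{w}-(n-2)w+\frac{\beta}{n-1}w_s w,
\]
obtained by the same computation that produced \eqref{eq-w}; indeed $B(w)=0$ is exactly \eqref{eq-w}. Since $v>0$ and the prefactor is positive, the sign of $\mathcal N[v]$ coincides with the sign of $B(w_v)$, so it suffices to control $B(\eta W)$ and $B(\eta^{-1}W)$, where $\eta:=\frac{r^2}{r^2-h^2}=(1-e^{2(s_h-s)})^{-1}$ and $s_h=\log h$.

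The decisive simplification is that $\eta$ solves the autonomous Riccati equation $\eta_s=-2\eta(\eta-1)$, hence $\eta_{ss}=4\eta(\eta-1)(2\eta-1)$, and similarly $\mu:=\eta^{-1}$ satisfies $\mu_s=2(1-\mu)$. Substituting a product $w=\mu W$ into $B$ and using $B(W)=0$, the genuinely nonlinear terms cancel (the $w_s^2/w$ and the $(n-2)w$ contributions drop out), leaving
\[
B(\mu W)=\mu_{ss}W+\frac{n-2}{2}\mu_s W_s-\frac{6-n}{4}\frac{\mu_s^{2}}{\mu}W+\frac{\beta}{n-1}\mu\mu_s W^{2}+\frac{\beta}{n-1}\mu(\mu-1)W_s W.
\]
For $\mu=\eta$ this collapses, after inserting $\eta_s,\eta_{ss}$, to $B(\eta W)=\eta(\eta-1)\,\mathcal B$ with
\[
\mathcal B=\big[(n+2)\eta-(n-2)\big]W-(n-2)W_s-\frac{2\beta}{n-1}\eta W^{2}+\frac{\beta}{n-1}W_s W .
\]
Since $\eta(\eta-1)>0$ for $r>h$, the supersolution property is equivalent to $\mathcal B\le 0$.

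To finish I would invoke the soliton asymptotics of Proposition~\ref{prop-prelim}: $W_s$ is bounded and $W_s\to\frac{(n-1)(n-2)}{\beta}$, while $W\to+\infty$. Because $\eta\ge1$ and the factor $(n+2)-\frac{2\beta}{n-1}W$ is negative once $W$ is large, one bounds
\[
\mathcal B\le 4W-\frac{2\beta}{n-1}W^{2}-(n-2)W_s+\frac{\beta}{n-1}W_s W,
\]
a downward-opening quadratic in $W$ whose roots stay uniformly bounded because $W_s$ is bounded; choosing $R=R(n,\beta,\lambda)$ so large that $W(\log R)$ exceeds the largest such root forces $\mathcal B<0$ for every $h>R$ and every $r>h$, which gives the supersolution. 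The subsolution is handled by the same formula with $\mu=\eta^{-1}$, reversing the inequalities.

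The main obstacle is making the sign of the residual hold \emph{uniformly} across the whole exterior region, and especially near the inner sphere $r=h$, where $\eta\to\infty$ (resp.\ $\mu\to0$) and the singular term $\frac{\mu_s^2}{\mu}W$ dominates the expression for $B(\mu W)$. What rescues the supersolution there is precisely the unbounded growth $W\to+\infty$ of the steady soliton, which lets the negative quadratic $-\frac{2\beta}{n-1}\eta W^{2}$ overwhelm the singular term; this is what forces the threshold $R$ to be large. The inner-boundary behaviour is the genuinely delicate point for the subsolution as well, since there the sign of the coefficient $6-n$ enters through $\frac{\mu_s^2}{\mu}W$ and must be weighed against the same quadratic growth.
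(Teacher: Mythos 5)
Your supersolution argument is correct, and it takes a genuinely different route from the paper's. The paper never leaves the radial variable: it expands $\fr{n-1}{m}\Delta \bar v^m+\beta x\cdot\nabla\bar v+\gamma\bar v$ for $\bar v=f\,u_{\beta,\lambda}$ (equations \eqref{eq-comp1}--\eqref{eq-comp3}), bounds every correction term by $-Crf_r$ as in \eqref{eqn-all}, and uses the $1/\ln r$ asymptotics of Claim \ref{claim-asym} so that the single negative term $\beta urf_r$ absorbs everything once $h>R$. You instead pass to the cylindrical residual $B(w)$; I checked your pointwise identity $\mathcal N[v]=\fr{n-1}{(1-m)w_v^{2}}\,v\,B(w_v)$, the Riccati relations $\eta_s=-2\eta(\eta-1)$, $\eta_{ss}=4\eta(\eta-1)(2\eta-1)$, the general formula for $B(\mu W)$, and the factorization $B(\eta W)=\eta(\eta-1)\mathcal B$, and they are all exact. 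The conclusion $\mathcal B<0$ for $W$ large then follows as you say, since $W_s$ is bounded and $W\nearrow\infty$ (Proposition \ref{prop-prelim}). For the supersolution your version is arguably cleaner than the paper's: the cancellation is exact rather than asymptotic, and $R$ enters only through the roots of one explicit quadratic.

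The subsolution half, however, is a genuine gap, and your own formula shows that ``reversing the inequalities'' cannot work in low dimensions. Put $\mu=\eta^{-1}=1-e^{-2(s-s_h)}$, so $\mu_s=2(1-\mu)$ and $\mu_{ss}=-4(1-\mu)$; your displayed identity then factors as
\begin{equation*}
B(\mu W)=(1-\mu)\Big[-4W+(n-2)W_s+(n-6)\fr{1-\mu}{\mu}\,W+\fr{2\beta}{n-1}\mu W^{2}-\fr{\beta}{n-1}\mu W_s W\Big].
\end{equation*}
Fix $h$ and let $r\to h^{+}$, i.e.\ $\mu\to 0^{+}$: the two terms carrying a factor $\mu$ vanish, and the bracket tends to $-\infty$ when $3\le n<6$ (the singular term enters with the \emph{negative} coefficient $n-6$), while for $n=6$ it tends to $-4\big(W(s_h)-W_s(s_h)\big)$, which is negative for all large $h$ because $W(s_h)\to\infty$ while $W_s$ stays bounded. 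So for every admissible $h$ there is an open annulus adjacent to $\{r=h\}$ on which $\mathcal N[\underline v]<0$; enlarging $R$ only makes this worse. In dimensions $3\le n\le 6$ the inequality you need is therefore false near the inner sphere for this exact $\underline v$, and no choice of $R$ repairs it (a genuinely different barrier, e.g.\ a larger exponent on the cutoff factor, would be required). For $n>6$ the singular term is positive and your approach does close, but it needs the two-region argument you left implicit: near the boundary $(n-6)\fr{1-\mu}{\mu}W$ dominates, while on $\{\mu\ge\mu_0\}$ one needs $W(\log R)$ large so that $\fr{2\beta}{n-1}\mu W^{2}$ dominates --- exactly parallel to the paper's split into $h<r\le(1+\delta)h$ and $r>(1+\delta)h$.

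You should know this is not an artifact of your method. In the paper's own subsolution proof, the two most singular contributions near $r=h$ --- from $m\,r^2g^{m-1}g_{rr}$ and $m(m-1)\,r^2g^{m-2}g_r^{2}$ in \eqref{eq-comp3} --- combine into a multiple of $\fr{4m(2m-1)}{(1-m)^{2}}\big(\fr{h^2}{r^2-h^2}\big)^{2}$, and $2m-1=\fr{n-6}{n+2}$; the paper's assertion that this dominant term is positive thus also presupposes $n>6$. Your cylindrical formulation makes this dimensional restriction visible where the paper's radial computation obscures it, but as submitted your proposal establishes only the supersolution half of the proposition.
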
 

\begin{proof}
In the proof of this proposition, we may fix $\lambda =1$, $\beta=1$ and show that the proposition holds for $R=R(n)$ from the scaling shown in eq \eqref{eq-scaling}. However, we will not use this since it does not makes the proof easy in a significant way.

  We need the following claim. 

\begin{claim}\label{claim-asym}
The solution  radially symmetric solution  $u(x):=u_{\beta,\lambda}(x)=u_{\beta,\lambda}(|x|)$ of \eqref{eq-u2} satisfies 
$$-\fr{n-1}{m}\Delta u^m=\beta \left(ru_r+\fr{2}{1-m}u\right)\sim \fr{\beta}{(1-m)} \fr{u}{\ln r}, \qquad \text{ as }r\to \infty$$ and hence 
$$ru_r\sim -\fr{2}{1-m}u, \qquad  \text{ as }r\to \infty.$$
\end{claim} 
\begin{proof}[Proof of Claim] As in the previous section, we set  $w(s):=r^2u^{1-m}(r)$ and $s=\ln r$. Then,
 $$\begin{aligned}w_s=r\, w_r=r\, (2ru^{1-m}+r^2(1-m)u^{-m}u_r)=r^2 u^{-m}(1-m)\, \left(ru_r+\fr{2}{1-m}u\right).\end{aligned}$$
The claim readily  follows from ${\ds \lim_{r\to \infty}u^{1-m}\fr{r^2}{\ln r}=\fr{(n-1)(n-2)}{\beta}}$ and ${\ds \lim_{s\to\infty}w_s=\fr{(n-1)(n-2)}{\beta}}$. \end{proof}

Denote for simplicity ${\ds f := \left(\fr{r^2}{r^2-h^2}\right)^\fr{1}{1-m}}$ and $u(x):=u_{\beta, \lambda}(x)$ so that  $\bar v = u \, f $. We have 
\begin{equation}\label{eq-comp1}\begin{aligned} \fr{n-1}{m}\Delta\bar v^m +\beta \, (r\bar v_r + \fr{2}{1-m} \bar v )&
= \fr{n-1}{m}( f^m \Delta u^m + u^m\Delta f^m + 2 u^m_r f^m_r ) + \beta \, (r u_r+\fr{2}{1-m}u) f + \beta urf_r \\&= \fr{n-1}{m}(f-f^m)(-\Delta u^m) +\fr{n-1}{m} (u^m\Delta f^m + 2 u^m_r f^m_r ) + \beta  ruf_r\end{aligned}\end{equation}
Meanwhile, \begin{equation}\label{eq-comp2-0}u^m_r f^m_r = m^2 u^{m-1}u_r f^{m-1}f_r\quad\end{equation}
and 
\begin{equation}\label{eq-comp2}\begin{aligned}&\Delta f^m = (mf^{m-1}f_r)_r + \fr{n-1}{r}mf^{m-1}f_r = 
m f^{m-1}f_{rr}+m(n-1) \fr{1}{r}f^{m-1}f_r +m(m-1)f^{m-2}f_r^2.
\end{aligned}
\end{equation}
\begin{equation}\label{eq-comp3}\begin{aligned}
RHS \eqref{eq-comp1}= &\beta urf_r + \fr{n-1}{m}u \, \bigg[ \fr{-\Delta u^m}{u} (f-f^m)+ 2m^2 \fr{ru_r}{u} \fr{u^{m-1}}{r^2} rf^{m-1}f_r\\ 
&+m \fr{u^{m-1}}{r^2} r^2f^{m-1} f_{rr} + m(n-1) \fr{u^{m-1}}{r^2} rf^{m-1}f_r  +  m(m-1)\fr{u^{m-1}}{r^2} r^2 f^{m-2}f_r^2 \bigg].
\end{aligned}\end{equation}
We want to bound all other terms  by first negative term $\beta urf_r$. In that purpose, we compute
$$f = \left(1 + \fr{h^2}{r^2-h^2}\right)^{\fr{1}{1-m}},\qquad f-f^m = (f^{1-m}-1)f^m =\fr{h^2}{r^2-h^2} f^m 
$$
$$rf_r= \fr{r}{1-m}\left(1+\fr{h^2}{r^2-h^2}\right)^{\fr{m}{1-m}} \fr{-2h^2r}{(r^2-h^2)^2}=\fr{f^m}{1-m}\fr{-2h^2r^2}{(r^2-h^2)^2}, $$
$$
r^2f^{m-2}f_r^2 = f^{m-2} \fr{1}{(1-m)^2}f^{2}\fr{4h^4}{(r^2-h^2)^2}=\fr{f^m}{(1-m)^2}\fr{4h^4}{(r^2-h^2)^2}$$ 
$$f^{m-1}rf_r = \fr{1}{1-m} f^m \fr{-2h^2}{(r^2-h^2)} 
$$
and 
\begin{equation*}\begin{aligned}
r^2f^{m-1} f_{rr}& =(r^2-h^2)f_{rr} \\ &=\fr{m(r^2-h^2)}{(1-m)^2}\left( \fr{r^2}{r^2-h^2} \right) ^{\fr{m}{1-m} -1}\fr{4h^4r^2}{(r^2-h^2)^4}+\fr{r^2-h^2}{1-m}\left(\fr{r^2}{r^2-h^2}\right)^\fr{m}{1-m}\left(\fr{2h^2r^2+2h^4}{(r^2-h^2)^3}\right)\\ &=\fr{1}{1-m} f^m \left[ \fr{m}{1-m} \fr{4h^4}{(r^2-h^2)^2} + \fr{ 2(r^2h^2 + h^4)}{(r^2-h^2)^2}\right].
\end{aligned}
\end{equation*}
This shows there is some $C=C(n)>0$, which, in particular, independent of $h$, such that for all $h>1$ on $\{r>h\}$, 
\begin{equation}\label{eqn-all} 
|f-f^m|,\ |r^2 f^{m-2}f_r^2|,\ |r^2f^{m-1}f_{rr}|,\ |rf^{m-1}f_r| \le -C rf_r.
\end{equation}
Using the asymptotics in Claim \ref{claim-asym}, we have
 \begin{equation}\label{eq-compsim}
\fr{-\Delta u^m}{u} \sim \fr{m\beta}{(n-1)(1-m)} \fr{1}{\ln r}, \quad \fr{u^{m-1}}{r^2}\sim \fr{\beta}{(n-1)(n-2)}\fr{1}{\ln r},  \quad
\fr{ru_r}{u}\sim- \fr{2}{1-m}\end{equation}as $r\to \infty$. Combining \eqref{eq-comp2}, \eqref{eqn-all} and  \eqref{eq-compsim}.
shows that there exists  $R=R(\beta, n, \lambda)$ such that for ${\ds \bar v = u \cdot\left(\fr{r^2}{r^2-h^2}\right)^{\fr{1}{1-m}}}$ with $h>R$, 
we have $$\fr{n-1}{m}\Delta \bar v^m  + \beta x \cdot \nabla  \bar v+ \gamma \bar v \le \fr{\beta}{2} \, ruf_r <0\quad   \text{ on }\{r=|x|> h\}.$$ 
This proves that $\bar v$ is a supersolution of \eqref{eq-u_t} in the considered region. 
\smallskip
For the $\underline v = u \left(\fr{r^2-h^2}{r^2}\right)^\fr{1}{1-m} = u \, g$, equations \eqref{eq-comp1}, \eqref{eq-comp2-0}, \eqref{eq-comp2}, and \eqref{eq-comp3} are the same except $\bar v$ and $f$ changed by $\underline v$ and $g$. We compute,
$$
g = \left(1 - \fr{h^2}{r^2}\right)^{\fr{1}{1-m}}, \qquad g-g^m = (g^{1-m}-1)g^m =-\fr{h^2}{r^2} g^m
$$
$$ rg_r =r \fr{1}{1-m}\left(1-\fr{h^2}{r^2}\right)^{\fr{m}{1-m}} \fr{2h^2}{r^3}=\fr{1}{1-m}g^m\fr{2h^2}{r^2}, \quad g^{m-1}rg_r = \fr{1}{1-m} g^m \fr{2h^2}{r^2-h^2}$$
$$r^2g^{m-2}g_r^2 = \fr{1}{(1-m)^2}g^{m-2} g^{2m}\fr{4h^4}{r^4}=4 \fr{g^m}{(1-m)^2} \left(\fr{h^2}{r^2-h^2}\right)^2$$
and 
\begin{equation}
\begin{split}
r^2g^{m-1} g_{rr} &=\fr{r^4}{r^2-h^2}g_{rr} \\
&=\fr{r^4}{r^2-h^2}\fr{m}{(1-m)^2}\left(\fr{r^2-h^2}{r^2} \right) ^{\fr{m}{1-m} -1}\fr{4h^4}{r^6}+\fr{r^4}{r^2-h^2}\fr{1}{1-m}\left(\fr{r^2-h^2}{r^2}\right)^\fr{m}{1-m}\left(\fr{-6h^2}{r^4}\right)\\
&=\fr{1}{1-m} g^m \left[ \fr{m}{1-m} 4\left(\fr{h^2}{r^2-h^2}\right)^2 - \fr{6h^2}{r^2-h^2}\right].
\end{split} 
\end{equation}

Since ${\ds \left(\fr{h^2}{r^2-h^2}\right)^2}$ in $\ds r^2 g^{m-2}g_r^2 $ dominates all other terms appearing above, namely ${\ds \left(\fr{h^2}{r^2-h^2}\right),\ \fr{h^2}{r^2}}$ and ${\ds  \fr{h^4}{r^6}}$ near $r=h$, we may combine \eqref{eq-comp3} 
and \eqref{eq-compsim} to find $R_1=R_1(n,\beta, \lambda)>0$ and $\delta(n,\beta, \lambda)>0$ such that for $h>R_1$ and $h<r\le(1+\delta)h$, $$\fr{n-1}{m}\Delta \underline v^m  + \beta x \cdot \nabla  \underline v+ \gamma \underline v \ge \fr{n-1}{2}u^mg^{m-1}g_{rr}>0
\qquad \mbox{on} \,\, h<r\le(1+\delta)h.$$
On the remaining region $r>(1+\delta)h$, there is $C=C(n,\delta)>0$ such that \begin{equation}
|g-g^m|,\ |r^2 g^{m-2}g_r^2|,\ |r^2g^{m-1}g_{rr}|,\ |rg^{m-1}g_r| \le C\,  rg_r.
\end{equation}
Combining again \eqref{eq-comp3} and  \eqref{eq-compsim}, it follows that for each $\delta>0$ there is $R_2=R_2(\beta, n, \lambda, \delta)$ such that for ${\ds \underline v = u\cdot\left(\fr{r^2-h^2}{r^2}\right)}$ with $h>R_2$,
$$\fr{n-1}{m}\Delta \underline v^m  + \beta x \cdot \nabla  \underline v+ \gamma \underline v \ge \fr{\beta}{2} urg_r>0 \quad \text{ on }
\{r> (1+\delta)h\}.$$ 
Setting  $R:=\max(R_1,R_2)$, it follows that $\underline v$ is a subsolution on the region   $\{r> h\}$, for $h > R$,
concluding the proof of the proposition. 
\end{proof}

Using the previous barrier construction we will now show that the Yamabe flow preserves the asymptotic behavior of
our initial data $u_0$ as in Theorem \ref{thm-2}. 

\begin{prop}\label{prop-comp}
Let    $u_0\in L^1_{loc }(\R^n)$ satisfying  ${\ds \limsup_{r\to \infty}\left[r^2u_0^{1-m} -   \fr{(n-2)(n-1)}{\beta} \ln r\right]=K_1<\infty}.$
Then, 	the solution $u$ of \eqref{eq-flatyamabe}  with initial data $u_0$ satisfies 
 $$\limsup_{r\to \infty}\left[r^2u^{1-m} -   \fr{(n-2)(n-1)}{\beta}\right] \le K_1-{(n-1)(n-2)}t, \quad  \text{ for } t\ge0.$$ 
Also if ${\ds \liminf_{r\to \infty}\left[r^2u_0^{1-m} -   \fr{(n-2)(n-1)}{\beta} \ln r\right]=K_2 }>-\infty$, then 
	 $$K_2-{(n-1)(n-2)}t\le \liminf_{r\to \infty}\left[r^2u^{1-m} -   \fr{(n-2)(n-1)}{\beta}\right], \quad   \mbox{ for } t\ge0.$$
	 \begin{proof}
By Proposition \ref{prop-asymp},   there exists  $\lambda_1>0$ such that $$\lim_{r\to \infty}\left[r^2u_{\beta,\lambda_1}^{1-m} -   \fr{(n-2)(n-1)}{\beta} \ln r\right]= K_1.$$ For   each $\epsilon>0$ there exists  
$h>R(n,\beta,\lambda_1)$,  (where $R(n,\beta,\lambda_1)$ is taken by Proposition \ref{prop-barrier}) such that $$\text{$u_0\le u_{\beta,\lambda_1+\epsilon}\left(\fr{r^2}{r^2-h^2}\right)^{\fr{1}{1-m}}:=\bar v_{\beta,\lambda_1+\epsilon}$, \quad  on $\{r>h\}$}.$$
		 Since ${\ds \bar v_{\beta,\lambda_1+\epsilon}:= u_{\beta,\lambda_1+\epsilon}\left(\fr{r^2}{r^2-h^2}\right)^{\fr{1}{1-m}} \to \infty}$ as $r\to h+$,  the comparison gives us that  $\bar u(x,t):= e^{ \gamma t}u(e^{\beta t}x,t)\le \bar v_{\beta,\lambda_1+\epsilon}$ on $r>h$ and $t>0$. 
		 Also, since ${\ds \left (  \frac {r^2}{r^2-h^2} \right)^{\fr{1}{1-m}}  \to 1}$, as $r \to +\infty$, using  \eqref{eqn-kappa} we conclude
		 that  $$\text{$\lim_{r\to \infty}\left[r^2\bar v_{\beta,\lambda_1+\epsilon}^{1-m} -   \fr{(n-2)(n-1)}{\beta} \ln r\right]= K_1+\fr{2(n-1)(n-2)}{(n+2)\beta}\ln(1+\fr{\epsilon}{\lambda_1})$}.$$ This translates into the following asymptotics of $u(x,t)$ whihc holds for  for each $t>0$ 
		 	 $$\limsup_{r\to \infty}\left[r^2u^{1-m} -   \fr{(n-2)(n-1)}{\beta} \, \ln r\right] \le K_1+\fr{2(n-1)(n-2)}{(n+2)\beta}\ln\left(1+\fr{\epsilon}{\lambda_1}\right)-{(n-1)(n-2)}t.$$ Taking the limit  $\epsilon\to 0+$ we reach our conclusion. 
			 The other side inequality can  be done similarly by comparison with the constructed subsolution 
			 ${\ds {\underline v}_{\ \beta,\lambda_1-\epsilon}:= u_{\beta,\lambda_1-\epsilon}\left(\fr{r^2-h^2}{r^2}\right)^{\fr{1}{1-m}}}$. 
	 	\end{proof}	\end{prop}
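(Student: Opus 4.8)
The plan is to compare the rescaled solution $\bar u(x,t) := e^{\gamma t} u(e^{\beta t}x,t)$, which solves the rescaled equation \eqref{eq-u_t}, against the stationary barriers produced in Proposition \ref{prop-barrier}, and then to unscale. First I would use Proposition \ref{prop-asymp} to match a soliton to the leading data of $u_0$: by \eqref{eqn-kappa} the second-order coefficient $\lambda \mapsto \fr{(n-1)(n-2)}{\beta}\left(\fr{2\ln\lambda}{n+2}+\fr{\ln\beta}{2}+\kappa(n)\right)$ is a strictly increasing function of $\lambda$ onto all of $\R$, so there is a unique $\lambda_1>0$ with $\lim_{r\to\infty}\left[r^2 u_{\beta,\lambda_1}^{1-m}-\fr{(n-2)(n-1)}{\beta}\ln r\right]=K_1$. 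This selects the soliton whose asymptotics agree with the prescribed leading coefficient.

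Next, fix $\e>0$ and pass to $\lambda_1+\e$, whose matching coefficient is the strictly larger value $K_1+\fr{2(n-1)(n-2)}{(n+2)\beta}\ln(1+\e/\lambda_1)$. Since $\limsup_{r\to\infty}\left[r^2 u_0^{1-m}-\fr{(n-2)(n-1)}{\beta}\ln r\right]=K_1$ sits strictly below this, we get $u_0\le u_{\beta,\lambda_1+\e}$ for all large $r$; as the factor $(\fr{r^2}{r^2-h^2})^{1/(1-m)}>1$ only inflates the barrier, the supersolution $\bar v:=u_{\beta,\lambda_1+\e}(\fr{r^2}{r^2-h^2})^{1/(1-m)}$ dominates $u_0=\bar u(\cdot,0)$ on $\{r>h\}$ once $h$ exceeds both the threshold $R(n,\beta,\lambda_1+\e)$ of Proposition \ref{prop-barrier} and the radius past which the asymptotic inequality holds. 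I would then run the comparison principle for \eqref{eq-u_t} on $\{r>h\}\times(0,\infty)$. \textbf{The main obstacle is exactly this comparison on an exterior domain for a singular/degenerate equation}; the device that makes it go through is that $\bar v\to+\infty$ as $r\to h^+$, so no knowledge of $\bar u$ at the inner boundary is required---the barrier automatically dominates there for all time---and comparison yields $\bar u(x,t)\le\bar v(x)$ on $\{r>h\}\times(0,\infty)$.

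With this bound in hand I would read off the asymptotics and unscale. Because $\fr{r^2}{r^2-h^2}\to1$ as $r\to\infty$, the barrier inherits the second-order behavior of $u_{\beta,\lambda_1+\e}$, giving $\limsup_{r\to\infty}\left[r^2\bar u^{1-m}-\fr{(n-2)(n-1)}{\beta}\ln r\right]\le K_1+\fr{2(n-1)(n-2)}{(n+2)\beta}\ln(1+\e/\lambda_1)$. The conceptually key step is then the unscaling: writing $\rho:=e^{-\beta t}r$ and using $\gamma(1-m)=2\beta$, one gets the exact identity $r^2u^{1-m}=\rho^2\bar u^{1-m}$ together with $\ln r=\ln\rho+\beta t$, whence $r^2u^{1-m}-\fr{(n-2)(n-1)}{\beta}\ln r=\left[\rho^2\bar u^{1-m}-\fr{(n-2)(n-1)}{\beta}\ln\rho\right]-(n-1)(n-2)t$. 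It is here that the linear drift $-(n-1)(n-2)t$ is produced, purely from rescaling the logarithm. Since $\rho\to\infty$ precisely when $r\to\infty$, taking $\limsup$ and then letting $\e\downarrow0$ (so the correction term vanishes) yields the stated upper bound. The $\liminf$ estimate is symmetric: replace $\bar v$ by the subsolution $\underline v:=u_{\beta,\lambda_1-\e}(\fr{r^2-h^2}{r^2})^{1/(1-m)}$, which vanishes at $r=h$ and is therefore dominated by the nonnegative $\bar u$ on the inner boundary, so comparison gives $\bar u\ge\underline v$, and the identical unscaling produces the lower bound.
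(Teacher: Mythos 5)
Your proposal is correct and follows essentially the same route as the paper: match $\lambda_1$ via Proposition \ref{prop-asymp}, dominate $u_0$ outside a large ball by the exterior supersolution $u_{\beta,\lambda_1+\epsilon}\left(\fr{r^2}{r^2-h^2}\right)^{1/(1-m)}$ of Proposition \ref{prop-barrier} (using that it blows up at $r=h^+$ to avoid any inner boundary data), read off the asymptotics of the rescaled solution, unscale to produce the $-(n-1)(n-2)t$ drift, let $\epsilon\downarrow 0$, and argue symmetrically with the subsolution for the lower bound. Your write-up actually makes explicit two points the paper leaves implicit---the unscaling identity $r^2u^{1-m}-\fr{(n-2)(n-1)}{\beta}\ln r=\rho^2\bar u^{1-m}-\fr{(n-2)(n-1)}{\beta}\ln\rho-(n-1)(n-2)t$ and the boundary reasoning for the subsolution comparison---which is a welcome clarification rather than a deviation.
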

	 	
We will mow conclude the proof of Theorem \ref{thm-2}. 		
	
\begin{proof}[Proof of Theorem \ref{thm-2}]
We have seen in  Theorem \ref{thm-1} that the rescaled solution $\bar u(x,t):=e^{\fr{2\beta}{1-m}t} u(e^{\beta t}\, x)$ converges
in $L^1_{loc}$, as $t \to +\infty$, to the steady soliton $ u_{\beta,\lambda}$.  
By the the discussion following Theorem \ref{thm-1}, to establish  the   $C_{loc}^\infty$ convergence,  it suffices to show that $\bar u(t)$ satisfies a uniform in time  $L^\infty_{loc}$ bound for $t \geq T$, for some $T >0$.   

Indeed, by  Theorem \ref{thm-V}, there is a finite time $T>0$ such that $||u(t)||_{L^\infty(\R^n)}<\infty$ for $t\ge T$. At $t=T$, Proposition \ref{prop-comp} implies$$\limsup_{|x|\to \infty}\left[|x|^2u(T)^{1-m} -   \fr{(n-2)(n-1)}{\beta} \ln |x|\right]<\infty .$$
We may now combine the  $L^\infty$ bound on $u(T)$ and this asymptotic behavior (similarly as in the proof of Corollary \ref{cor-uniform})   to show that there exists 
$\lambda_2 >0$ for which $$u(x,T)\le e^{-\fr{2\beta}{1-m}T}u_{\beta,\lambda_2}(e^{-\beta T}x).$$
This implies the bound $\bar u (x,t) \le  u_{\beta,\lambda_2}(x) $ for $t\ge T$, from which the $L^\infty_{loc}$ bound on $\bar u$
readily follows. This concludes the proof of the theorem. 
\end{proof}

\section{Examples of Type II Singularity }\label{sec-type2}
In this last section, we will construct noncompact conformally flat solutions $g=u\, \delta_{ij}$ of the Yamabe flow
\eqref{eq-yamabe} which admit  {\em   type II singularities}   both in a {\em finite time} and {\em infinite time}.  Before we start, let us fix
the following notation. 
\begin{notation}For any fixed $\beta>0$ and $\lambda>0$, we  denote by
\begin{itemize}
\item $u_{\beta,\lambda}$ (gradient Yamabe steady soliton) to be the unique radial solution of equation \eqref{eq-u} 
with $\beta$, $\gamma=\fr{2\beta}{1-m}$ and $u_{\beta,\lambda}(0)=
\lambda\text{, and}$

\item $v_{\beta,\lambda}$  (gradient Yamabe shrinker soliton) to be the unique radial solution with $\beta$, $\gamma=\fr{2\beta+1}{1-m}$ and $v_{\beta,\lambda}(0)=\lambda.$
\end{itemize}
\end{notation}

\begin{definition}
	Suppose that a solution $g(t)$ to Yamabe flow \eqref{eq-yamabe} on $t\in[0,T)$ has a singularity at $t=T<\infty$;  this finite time singularity is called type I if  $$ \sup_{M\times [0,T)} |Rm(x,t)|(T-t)<\infty,$$  and is called type II if $$ \sup_{M\times [0,T)} |Rm(x,t)|(T-t)=\infty.$$
\end{definition}

\begin{definition}
	A solution to Yamabe flow \eqref{eq-yamabe} on $t\in[0,\infty)$ is called type I if  $$ \sup_{M\times [0,\infty)} |Rm(x,t)|<\infty,$$  and is called type II if $$ \sup_{M\times [0,\infty)} |Rm(x,t)|=\infty.$$
\end{definition}

Before we proceed, we begin with the next simple observation. 

\begin{lemma}\label{lemma-decay} Let $g(t)=u^{1-m} \, \delta_{ij} $ on $t\in[0,T)$ be a  solution of the Yamabe flow \eqref{eq-yamabe} 
such that the scalar curvature satisfies  $R(x,t)\le f(t)\in L^1_{loc}([0,T))$. Then the conformal factor $u$ satisfies a pointwise estimate $$\text{$u(x,t)^{1-m}\ge 
u_0(x)^{1-m}e^{-\int_0^t f(s)ds}$ for $t\in[0,T)$.}$$ In particular, $R(x,t)\le \fr{K}{T-t}$ implies that $u(x,t)\ge u_0(x) \, (T-t)^{\fr{K}{1-m}}$ and $R(x,t)\le K$  that $u(x,t)\ge u_0(x) \,  e^{-\fr{K}{1-m}   t}.$
	\begin{proof}This is a straightforward ODE estimate. 
		For each fixed $x\in M$, the function $\phi(x,t):=u^{1-m}(x,t)$ satisfies $\phi_t=-R \, \phi$,  hence $(\log \phi)_t=-R\ge-f$. Integrating in time  gives the result.
		\end{proof}
\end{lemma}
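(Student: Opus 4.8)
The plan is to reduce the Yamabe flow to a pointwise-in-space scalar ODE in time for the conformal factor and then integrate an elementary differential inequality. Setting $\phi(x,t):=u^{1-m}(x,t)$, which is precisely the conformal factor of $g=\phi\,\delta_{ij}$, I would first observe that because the background metric $\delta_{ij}$ is fixed in time, the flow equation $\partial_t g=-R\,g$ reads, component by component, $\partial_t(\phi\,\delta_{ij})=-R\,\phi\,\delta_{ij}$, and hence $\phi_t=-R\,\phi$ at each fixed point $x$.

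Next I would treat this, for each fixed $x$, as a linear ODE in $t$. Since $\phi>0$ one may divide by $\phi$ to obtain $(\log\phi)_t=-R$, and the hypothesis $R(x,t)\le f(t)$ then gives the differential inequality $(\log\phi)_t\ge -f(t)$. Integrating in $t$ from $0$ to $t$, and using $\phi(x,0)=u_0(x)^{1-m}$, yields $\log\phi(x,t)-\log u_0(x)^{1-m}\ge -\int_0^t f(s)\,ds$, which after exponentiating is exactly the asserted estimate $u(x,t)^{1-m}\ge u_0(x)^{1-m}\,e^{-\int_0^t f(s)\,ds}$.

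The two special cases then follow by evaluating the time integral. For $f(t)=K/(T-t)$ one has $\int_0^t f(s)\,ds=K\log\big(T/(T-t)\big)$, so the exponential factor equals $\big((T-t)/T\big)^K$; taking $(1-m)$-th roots recovers $u(x,t)\ge u_0(x)\,(T-t)^{K/(1-m)}$ up to the fixed multiplicative constant $T^{-K/(1-m)}$. For the constant bound $f(t)=K$ one has $\int_0^t f(s)\,ds=Kt$, and the estimate becomes $u(x,t)\ge u_0(x)\,e^{-Kt/(1-m)}$ directly.

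I do not expect any substantive obstacle, since the whole argument is a single ODE integration once the reduction $\phi_t=-R\,\phi$ is established. The only points requiring a word of care are the positivity $\phi>0$, which legitimizes passing to $\log\phi$, and enough regularity in $t$ to integrate the inequality; both hold automatically for the smooth solutions considered here.
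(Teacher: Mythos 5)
Your proposal is correct and follows essentially the same route as the paper: reduce the flow to the pointwise ODE $\phi_t=-R\,\phi$ for $\phi=u^{1-m}$, pass to $(\log\phi)_t\ge -f$, and integrate in time. Your extra care in evaluating the integrals is sound, and you rightly note that the case $f(t)=K/(T-t)$ actually yields $u\ge u_0\,\bigl((T-t)/T\bigr)^{K/(1-m)}$, i.e.\ the paper's stated bound holds only up to the harmless constant $T^{-K/(1-m)}$, which is immaterial for the type II arguments where the lemma is applied.
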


\begin{theorem}\label{thm-typeII-1} Suppose $0<u_0\in L^\infty (\R^n) \cap C(\R^n)$  and satisfies the bound  $|x|^2u_0^{\fr{4}{n+2}} < {(n-1)(n-2)T}$,
  for some $T>0$, and the asymptotic behavior 
$$\lim_{|x|\to\infty} \left||x|^2 \, u_0^{\fr{4}{n+2}} - {(n-1)(n-2)T}\right|=0 \quad \mbox{and} \quad  
\lim_{|x|\to\infty} |x|^\epsilon\left ||x|^2 \, u_0^{\fr{4}{n+2}} - {(n-1)(n-2)T}\right| =\infty, \forall \epsilon >0.$$
	 Then, the solution $g=u^{1-m}\, \delta_{ij}$ of the Yamabe flow \eqref{eq-yamabe} with initial data $g_0=u_0^{1-m} \delta_{ij}$
	 becomes extinct at time $T$ and develops a  type II singularity at $t=T$.
	
	\begin{proof}
		The fact that the unique smooth solution $u$ of \eqref{eq-u} with initial data $u_0$  exists at least up to time $t=T$,  can be easily seen   by comparing  with family of 
		Barrenblatt   solutions which extinct at $T-\epsilon$, as  in Lemma 4.2 in \cite{DKS}. On the other hand because
		of our initial bound  $|x|^2u_0^{\fr{4}{n+2}} < {(n-1)(n-2)T}$, by comparing  with the shrinking cylinder which vanishes at $t=T$, we know that our solution becomes  extinct at $t=T$. 
		
		It suffices to prove that the singularity is of type II. We argue by contradiction and suppose that it is of type I,
		which means that   there is $K>0$ such that  
		$$\fr{|R|}{T-t}\le C_n\fr{|Rm|}{T-t}\le C_n\, K.$$ Let us fix  $\beta>0$ so that $$\gamma=\fr{2\beta+1}{1-m}>\fr{C_nK}{1-m}.$$  Then, for this choice of $\beta$, there is an one parameter family $\{ v_\lambda \}_{\lambda >0}$ of  radial solutions (shrinkers) of \eqref{eq-u}, with $v_\lambda(0)=\lambda$. 
	
\begin{claim}\label{claim-cl1} There exist   a large $\lambda_0 >0$   such that $T^\gamma v_{\lambda_0} ( xT^\beta )\ge u_0(x) $,   for all $x\in\R^n$.\end{claim}
	\begin{proof}[Proof of Claim \ref{claim-cl1}]		By a scaling argument, we can assume $T=1$.
		First, choose any $\lambda_1$ with $v_{\lambda_1}(0)=\lambda_1>u_0(0)$. If the claim holds for this $\lambda_1$, we are done. If not, we first recall   asymptotics at infinity for  the  conformally flat shrinker $g_\lambda:=v_\lambda\, \delta_{ij}$ shown in \cite{DKS}, namely \begin{equation}\label{eq-asym}|x|^2 \, v_{\lambda_1}^{1-m}(x)=(n-1)(n-2)-B \, |x|^{-\gamma}+o(|x|^{-\gamma})\end{equation} 
		as $|x|\to\infty$ for some $0<B=B(n,\beta,\lambda_1)$ and $0<\gamma=\gamma(n,\beta)$. Our assumed conditions on the initial data $u_0$ 
and 		 \eqref{eq-asym}  imply  that ${\mathcal K} :=\{x\in\R^n \, | \, u_0\ge v_{\lambda_1}\}$ is a compact set which doesn't contain the origin
(since $v_{\lambda_1}(0)=\lambda_1>u_0(0)$). Next, we can observe that 
		$$|x|^2 \, v_\lambda(x)^{1-m}=((\lambda/\lambda_1)^{\fr{1-m}{2}}|x|)^2 v_{\lambda_1}((\lambda/\lambda_1)^{\fr{1-m}{2}}x) .$$ 
		This and \eqref{eq-asym} imply $|x|^2 \, v_\lambda(x)^{1-m} \to (n-1)(n-2)$, as $\lambda \to \infty$,  uniformly on ${\mathcal K}$ while $|x|^2u_0(x)<(n-1)(n-2)$ on ${\mathcal K}$. Using this uniform convergence, therefore, we may find some $\lambda_0>\lambda_1$ such that $|x|^2v_{\lambda_0}(x)> |x|^2 u_0(x)$ on ${\mathcal K}$, namely $v_{\lambda_0}(x)>  u_0(x)$ on ${\mathcal K}$.  
On the other hand, the monotonicity of $v_\lambda$ with respect to $\lambda$ implies that  $v_{\lambda_0}> v_{\lambda_1} >u_0$ on 
$\R^n\setminus \mathcal K$, concluding that $v_{\lambda_0} > u_0$ on $\R^n$. 
	\end{proof}

We will now conclude the proof of the theorem. By the comparison principle,   $(T-t)^\gamma v_{\lambda_0}( x(T-t)^\beta)\ge u(x,t)$,  on $t\in[0,T)$. On the other hand, by Lemma \ref{lemma-decay}, ${\ds u(x,t)\ge u_0(x)(T-t)^{\fr{C_nK}{1-m}}}$. In particular, at $x=0$, we have $$(T-t)^\gamma v_{\lambda_0}(0)\ge u(0,t) \ge u_0(0)(T-t)^{\fr{C_nK}{1-m}}.$$ Since $\gamma> \fr{C_nK}{1-m}>0$ and $u_0(0) >0$, there must be some $t<T$ close to $T$ so that above inequality fails to hold, leading to a contradiction.
We conclude that  the singularity must be of type II.
		
			\end{proof}
	\end{theorem}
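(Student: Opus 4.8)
The plan is to treat the two assertions separately. Extinction at $t=T$ follows from comparison arguments, while the \emph{type II} nature of the singularity is the heart of the matter, and I would establish it by contradiction, pitting the self-similar time rate of a suitably chosen \emph{Yamabe shrinker} $v_{\beta,\lambda}$ against the rate that a type I singularity would impose.

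First I would show that the smooth solution $u$ exists on $[0,T)$ and becomes extinct precisely at $t=T$. Existence up to $T$ comes from trapping $u$ between Barenblatt solutions whose extinction times approach $T$ from below, exactly as in Lemma~4.2 of \cite{DKS}. For extinction \emph{at} $T$, the strict bound $|x|^2 u_0^{1-m} < (n-1)(n-2)\,T$ (here $1-m=\fr{4}{n+2}$) places $u_0$ below the shrinking cylinder, whose conformal factor obeys $|x|^2 w^{1-m}=(n-1)(n-2)(T-t)$ and which vanishes at $t=T$; the comparison principle then forces $u$ to become extinct by time $T$.

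For the type II claim I would assume, for contradiction, that the singularity is type I, so that $\sup|Rm|(T-t)\le K$ and hence $R\le \fr{C_nK}{T-t}$. Feeding $f(t)=\fr{C_nK}{T-t}$ into Lemma~\ref{lemma-decay} yields the lower bound $u(x,t)\ge c\,u_0(x)\,(T-t)^{\fr{C_nK}{1-m}}$. Against this I would erect a shrinker upper barrier. Choosing $\beta>0$ large enough that the self-similar time exponent $\gamma=\fr{2\beta+1}{1-m}$ exceeds $\fr{C_nK}{1-m}$, the function $(T-t)^{\gamma}\,v_{\beta,\lambda_0}\big(x(T-t)^{\beta}\big)$ is an exact self-similar solution of \eqref{eq-flatyamabe} vanishing at $t=T$. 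The decisive point is to pick $\lambda_0$ so that this barrier lies above $u_0$ at $t=0$; the comparison principle then propagates the inequality to all $t\in[0,T)$. Evaluating at $x=0$ gives
\[
(T-t)^{\gamma}\,v_{\beta,\lambda_0}(0)\ \ge\ u(0,t)\ \ge\ c\,u_0(0)\,(T-t)^{\fr{C_nK}{1-m}},
\]
and since $\gamma>\fr{C_nK}{1-m}$ and $u_0(0)>0$, the left side decays strictly faster in $(T-t)$, so the inequality must fail as $t\uparrow T$. This contradiction forces the singularity to be type II.

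The main obstacle is the barrier step, and it is exactly where the slow-decay hypothesis on $u_0$ is used. The shrinker has a \emph{polynomial} tail $|x|^2 v_{\beta,\lambda_1}^{1-m}=(n-1)(n-2)T-B\,|x|^{-\gamma(\beta)}+o(|x|^{-\gamma(\beta)})$ by \cite{DKS}, with a fixed decay rate $\gamma(\beta)>0$, whereas the assumption $\lim_{|x|\to\infty}|x|^{\epsilon}\big||x|^2 u_0^{1-m}-(n-1)(n-2)T\big|=\infty$ for every $\epsilon>0$ says that $u_0$ approaches the cylinder value more slowly than any power. Hence $u_0<v_{\beta,\lambda_1}$ outside a compact set, so the set $\mathcal{K}:=\{x:u_0(x)\ge v_{\beta,\lambda_1}(x)\}$ is compact and, after fixing $\lambda_1>u_0(0)$ so that $v_{\beta,\lambda_1}(0)>u_0(0)$, it avoids the origin. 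On this fixed compact set I would exploit the scaling identity $|x|^2 v_{\beta,\lambda}^{1-m}(x)=|y|^2 v_{\beta,\lambda_1}^{1-m}(y)$ with $y=(\lambda/\lambda_1)^{\fr{1-m}{2}}x$: as $\lambda\to\infty$ the right-hand side tends to $(n-1)(n-2)T$ uniformly on $\mathcal{K}$, while $|x|^2 u_0^{1-m}$ stays bounded strictly below there by continuity and compactness. A large $\lambda_0$ therefore dominates $u_0$ on $\mathcal{K}$, and monotonicity of $v_{\beta,\lambda}$ in $\lambda$ settles $\R^n\setminus\mathcal{K}$, giving $v_{\beta,\lambda_0}\ge u_0$ everywhere. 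Balancing these two competing decay rates, and keeping the exponent bookkeeping ($\gamma$ versus $\fr{C_nK}{1-m}$) consistent throughout, is the delicate part.
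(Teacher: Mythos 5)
Your proposal is correct and follows essentially the same route as the paper's own proof: Barenblatt/cylinder comparison for existence and extinction at $T$, then a contradiction argument pitting the shrinker barrier $(T-t)^{\gamma}v_{\beta,\lambda_0}\big(x(T-t)^{\beta}\big)$ (with $\gamma=\fr{2\beta+1}{1-m}>\fr{C_nK}{1-m}$, and $\lambda_0$ found via the polynomial shrinker tail versus the slower-than-polynomial decay of $u_0$, compactness of $\mathcal{K}$, and monotonicity in $\lambda$) against the type~I lower bound from Lemma \ref{lemma-decay}. Your separation of the spatial decay exponent $\gamma(\beta)$ from the time exponent $\gamma$, and the explicit constant $c$ in the ODE lower bound, are minor clarifications of the same argument.
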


\begin{theorem}\label{thm-typeII-2}  Suppose $0<u_0\in L^\infty (\R^n) \cap C(\R^n)$  and satisfies  
$$\lim_{|x|\to\infty} |x|^2u_0^{\fr{4}{n+2}}=\infty \qquad   \text{ and} \qquad \lim_{|x|\to\infty} \fr{|x|^2}{\ln|x|}u_0^{\fr{4}{n+2}} =0.$$
	Then, the solution $g=u^{1-m}\, \delta_{ij}$ of the Yamabe flow \eqref{eq-yamabe} with initial data $g_0=u_0^{1-m}\, \delta_{ij}$ exist globally on $0 \leq t < +\infty$  and develops a  type II singularity as $t \to \infty$.
	\begin{proof}
		The proof  is very  similar to that of Theorem \ref{thm-typeII-1} where the shrinkers $v_\lambda$ are replaced by the
		steady solito $u_\lambda$. The global in time existence with such initial condition is well known, for instance in Theorem 1.1 in \cite{H2} , hence it suffices to prove that the solution develops a type II singularity at $t=\infty$. 
		Suppose it is type I and so that there is $K>0$ such that  ${|Rm|}\le K$ and hence $${|R|}\le C_n\, |Rm|\le C_n \, K.$$ Let us choose any $\beta>0$ such that $\gamma=\fr{2\beta}{1-m}>\fr{C_nK}{1-m}$. For this fixed $\beta>0$,  there is an one parameter family 
		$\{ u_\lambda \}_{\lambda >0}$ of 
		radial solutions  of \eqref{eq-u}  with $u_\lambda(0)=\lambda$. 
		
		Using the asymptotics of a steady soliton and the observation that  $ \inf_{\mathcal K} u_\lambda \to \infty$,  as $\lambda\to\infty$,
		 for each compact ${\mathcal K} \subset \R^n$ (Corollary \ref{cor-uniform}), we may find large $\lambda_0 >0$ such that $ u_{\lambda_0} ( x )> u_0(x) $  for all $x\in\R^n$. Thus $e^{-\gamma t} u_{\lambda_0}( xe^{-\beta t })\ge u(x,t)$ on $t\in[0,\infty)$,  by the  comparison principle. On the other hand, by Lemma \ref{lemma-decay}, $u(x,t)\ge u_0(x)e^{-\fr{C_nK}{1-m}t}$. In particular, at $x=0$, we have $$e^{-\gamma t}v_{\lambda_0}(0)\ge u(0,t) \ge u_0(x)e^{-\fr{C_nK}{1-m}t}.$$ Since $\gamma> \fr{C_nK}{1-m}>0$, there must be some $t$ large so that above inequality fails, leading to a contradiction. We conclude that  the singularity of the solution $u$ must be  type II.
		\end{proof}
\end{theorem}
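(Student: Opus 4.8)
The plan is to argue by contradiction, using a steady soliton $u_{\beta,\lambda_0}$ — whose associated self-similar Yamabe flow solution decays exactly like $e^{-\gamma t}$ at the tip — as a barrier that decays \emph{too fast} to be compatible with a type I bound. First I would dispose of global existence: the hypotheses $\lim_{|x|\to\infty}|x|^2 u_0^{\frac{4}{n+2}} = \infty$ and $\lim_{|x|\to\infty}\frac{|x|^2}{\ln|x|}u_0^{\frac{4}{n+2}}=0$ place $u_0$ in a class for which long-time existence of the fast-diffusion flow \eqref{eq-flatyamabe} is already known (the cited Theorem~1.1 in \cite{H2}), so the real content is the type II claim.

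For the contradiction, suppose the singularity at $t=\infty$ is type I, so that $\sup|Rm|\le K$ and hence $|R|\le C_n K$ for a dimensional constant $C_n$. I would then \emph{choose the soliton parameter to beat this rate}: pick $\beta>0$ large enough that $\gamma=\frac{2\beta}{1-m}>\frac{C_n K}{1-m}$, which is always possible since $\beta$ is free. For this $\beta$ we have the one-parameter family $\{u_{\beta,\lambda}\}_{\lambda>0}$ of steady solitons solving \eqref{eq-u2}, each generating the self-similar solution $e^{-\gamma t}u_{\beta,\lambda}(e^{-\beta t}x)$ of the flow.

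The crux is to produce a single $\lambda_0$ with $u_{\beta,\lambda_0}(x)>u_0(x)$ for all $x\in\R^n$. Here the second hypothesis is decisive: it forces $u_0^{1-m}=o\!\left(\frac{\ln r}{r^2}\right)$, strictly faster decay than the soliton rate $u_{\beta,\lambda}^{1-m}\sim\frac{(n-1)(n-2)}{\beta}\frac{\ln r}{r^2}$ of \eqref{eq-firstdecay}. So after fixing any $\lambda_1$ with $u_{\beta,\lambda_1}(0)>u_0(0)$, the set $\mathcal K:=\{u_0\ge u_{\beta,\lambda_1}\}$ is compact; on $\mathcal K$ I would raise $\lambda$, using the observation $\inf_{\mathcal K}u_{\beta,\lambda}\to\infty$ as $\lambda\to\infty$ (Corollary~\ref{cor-uniform}), to get $u_{\beta,\lambda_0}>u_0$ there, while off $\mathcal K$ the monotonicity of $u_{\beta,\lambda}$ in $\lambda$ gives $u_{\beta,\lambda_0}>u_{\beta,\lambda_1}>u_0$ for free. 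This mirrors Claim~\ref{claim-cl1}.

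With the barrier in hand I would close the argument by comparing rates at the origin. The comparison principle applied to the self-similar supersolution yields $u(x,t)\le e^{-\gamma t}u_{\beta,\lambda_0}(e^{-\beta t}x)$ for all $t\ge0$, whereas Lemma~\ref{lemma-decay} together with $|R|\le C_n K$ gives the lower bound $u(x,t)\ge u_0(x)\,e^{-\frac{C_n K}{1-m}t}$. Evaluating both at $x=0$ gives
$$e^{-\gamma t}u_{\beta,\lambda_0}(0)\ge u(0,t)\ge u_0(0)\,e^{-\frac{C_n K}{1-m}t},$$
and since $\gamma>\frac{C_n K}{1-m}$ and $u_0(0)>0$, the left side decays strictly faster than the right, which is impossible for large $t$. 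I expect the main obstacle to be the global domination step $u_{\beta,\lambda_0}>u_0$ on all of $\R^n$ — reconciling the $\lambda$-independent first-order decay of the soliton at infinity with the need to dominate $u_0$ on compacta — rather than the final comparison at the origin, which is routine once $\beta$ is chosen correctly.
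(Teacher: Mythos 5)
Your proposal is correct and follows essentially the same route as the paper: global existence from \cite{H2}, a contradiction argument choosing $\beta$ so that $\gamma=\frac{2\beta}{1-m}>\frac{C_nK}{1-m}$, domination of $u_0$ by a steady soliton $u_{\beta,\lambda_0}$ (your compactness-plus-monotonicity argument is exactly the adaptation of Claim \ref{claim-cl1} that the paper invokes implicitly via ``the proof is very similar''), and the final rate comparison at the origin using the comparison principle and Lemma \ref{lemma-decay}. If anything, you spell out the domination step in more detail than the paper does.
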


\centerline{\bf Acknowledgements}

\smallskip 

\noindent  P. Daskalopoulos  has been partially supported by NSF grants  DMS-1266172 and  DMS-1600658.  Beomjun Choi has been partially supported by NSF grants  DMS-1600658.


\end{document}